      \def\@setcopyright{}
      \def\serieslogo@{}
\newcommand{\Complex}{\mathbb C}
\newcommand{\Real}{\mathbb R}
\newcommand{\N}{\mathbb N}
\newcommand{\ddbar}{\overline\partial}
\newcommand{\pr}{\partial}
\newcommand{\ol}{\overline}
\newcommand{\norm}[1]{\left\Vert#1\right\Vert}
\newcommand{\abs}[1]{\left\vert#1\right\vert}
\newcommand{\set}[1]{\left\{#1\right\}}
\newcommand{\To}{\rightarrow}
\theoremstyle{plain}
\newtheorem{theorem}{Theorem}[section]
\newtheorem{lemma}[theorem]{Lemma}
\newtheorem{proposition}[theorem]{Proposition}
\newtheorem{definition}[theorem]{Definition}
\newtheorem{ass}[theorem]{Assumption}
\numberwithin{equation}{section}
\begin{document}
\title[{Torus equivariant Szeg\H{o} kernel asymptotics on strongly pseudoconvex CR manifolds}]
{Torus equivariant Szeg\H{o} kernel asymptotics on strongly pseudoconvex CR manifolds}
\author[Hendrik Herrmann]{Hendrik Herrmann}
\address{Mathematical Institute, University of Cologne, Weyertal 86-90, 50931 Cologne, Germany}
\email{heherrma@math.uni-koeln.de or post@hendrik-herrmann.de}
\author[Chin-Yu Hsiao]{Chin-Yu Hsiao}
\address{Institute of Mathematics, Academia Sinica and National Center for Theoretical Sciences, Astronomy-Mathematics Building, No. 1, Sec. 4, Roosevelt Road, Taipei 10617, Taiwan}
\thanks{Chin-Yu Hsiao was partially supported by Taiwan Ministry of Science and Technology project 106-2115-M-001-012 and Academia Sinica Career Development
Award. }
\email{chsiao@math.sinica.edu.tw or chinyu.hsiao@gmail.com}
\author[Xiaoshan Li]{Xiaoshan Li}
\address{School of Mathematics
and Statistics, Wuhan University, Wuhan 430072, Hubei, China}
\thanks{Xiaoshan Li was supported by  National Natural Science Foundation of China (Grant No. 11501422).}
\email{xiaoshanli@whu.edu.cn}
\begin{abstract}
Let $(X, T^{1,0}X)$ be a compact strongly pseudoconvex CR manifold of dimension $2n+1$. Assume that $X$ admits a Torus action $T^d$. In this work,  we study the behavior of torus equivariant Szeg\H{o} kernels and prove that
the weighted  torus equivariant Szeg\H{o} kernels admit asymptotic expansions.
\end{abstract}

\maketitle \tableofcontents

\section{Introduction and statement of the main results}\label{s-gue170805}

Let $(X,T^{1,0}X)$ be a compact strongly pseudoconvex CR manifold of dimension $2n+1$, $n\geq1$. Assume that $X$ admits a compact connected Lie group action $G$. The study of $G$-equivariant CR functions and Szeg\H{o} kernel is closely related to some problems in CR, complex  geometry, Mathematical physics and geometric quantization theory. For example, for a compact irregular Sasakian manifold $X$, it was shown in~\cite{HHL17} that $X$ admits a torus action $T^d$ and the study of torus-equivariant CR functions and Szeg\H{o} kernel is important in Sasaki geometry. In this work, we consider a compact strongly pseudoconvex CR manifold $(X, T^{1,0}X)$ of dimension $2n+1$ and assume that $X$ admits a  torus
action $T^d=(e^{i\theta_1},\ldots,e^{i\theta_d})$. We introduce  the weighted  torus equivariant Szeg\H{o} kernels $S_{k,\tau}(x)$ and $S_k(x)$ (see\eqref{e-gue180528a}, \eqref{e-gue180529mIII}). We show that the weighted Szeg\H{o} kernel $S_{k,\tau}(x)$ admits a full asymptotic expansion (see Theorem~\ref{t-gue180528}) and we obtain the asymptotic leading term of the weighted Szeg\H{o} kernel $S_{k}(x)$ (see Theorem~\ref{t-gue180528I}).

We now formulate our main results. We refer the reader to Section~\ref{s:prelim} for some standard notations and terminology used here.
Let $(X,T^{1,0}X)$ be a compact strongly pseudoconvex CR manifold of diemension $2n+1$, $n\geq1$.
In this work, we assume that $X$ admits a torus
action $T^d=(e^{i\theta_1},\ldots,e^{i\theta_d})$. For every $j=1,2,\ldots,d$, let $T_j\in C^\infty(X,TX)$ be the real vector field on $X$ given by
\[(T_ju)(x)=\frac{\pr}{\pr\theta_j}u((1,\ldots,1,e^{i\theta_j},1,\ldots,1)\circ x)|_{\theta_j=0},\ \ u\in C^\infty(X).\]
We assume throughout that

\begin{ass}\label{a-gue180529}
\begin{equation}\label{e-gue180528}
[T_j, C^\infty(X,T^{1,0}X)]\subset C^\infty(X, T^{1,0}X),\ \ j=1,2,\ldots,d,
\end{equation}
and there are $\mu_j\in\mathbb{R}$, $j=1,\ldots,d$, such that $\set{\mu_1,\ldots,\mu_d}$ are linear independent over $\mathbb{Q}$ and
\begin{equation}\label{e-gue180528I}
\mbox{$\Complex TX=T^{1,0}X\oplus T^{0,1}X\oplus\Complex(\mu_1T_1+\cdots+\mu_dT_d)$ on $X$}.
\end{equation}
\end{ass}

Let $T=\mu_1T_1+\cdots+\mu_dT_d$, where $\mu_j\in\mathbb{R}$, $j=1,\ldots,d$, are as in \eqref{e-gue180528I}. Let $\omega_0\in C^\infty(X,T^*X)$ be the
global non-vanishing real $1$-form on $X$ such that $\langle\,\omega_0\,,\,T\,\rangle=-1$ on $X$ and $\langle\,\omega_0\,,\,u\,\rangle=0$, for every $u\in T^{1,0}X\oplus T^{0,1}X$. The Levi form of $X$ at $x\in X$ is the Hermitian quadratic form on $T^{1,0}_xX$ given by
\begin{equation}\label{e-gue180528aI}
\mathcal{L}_x(U,\ol V)=-\frac{1}{2i}\langle\,d\omega_0(x)\,,\,U\wedge\ol V\,\rangle,\ \ \forall U, V\in T^{1,0}_xX.
\end{equation}
We assume that the Levi form is positive on $X$.
From now on, we fix a torus invariant Hermitian metric $\langle\,\cdot\,|\,\cdot\,\rangle$ on $\Complex TX$ such that $T^{1,0}X\perp T^{0,1}X$,
$T\perp(T^{1,0}X\oplus T^{0,1}X)$, $\langle\,T\,|\,T\,\rangle=1$. For $x\in X$, let ${\rm det\,}\mathcal{L}_x=\lambda_1(x)\cdots\lambda_n(x)$, where $\lambda_j(x)$, $j=1,\ldots,n$, are the eigenvalues of the Levi form $\mathcal{L}_x$ with respect to $\langle\,\cdot\,|\,\cdot\,\rangle$. Let $(\,\cdot\,|\,\cdot\,)$ be the $L^2$ inner product on $C^\infty(X)$ induced by $\langle\,\cdot\,|\,\cdot\,\rangle$. Let $L^2(X)$ be the completion of $C^\infty(X)$ with respect to $(\,\cdot\,|\,\cdot\,)$ and we extend $(\,\cdot\,|\,\cdot\,)$
to $L^2(X)$ in the standard way and we write $\norm{\cdot}$ to denote the corresponding norm. For $(p_1,\ldots,p_d)\in\mathbb{Z}^d$, let
\[C^\infty_{p_1,\ldots,p_d}(X):=\set{u\in C^\infty(X);\, u((e^{i\theta_1},\ldots,e^{i\theta_d})\circ x)=e^{ip_1\theta_1+\cdots+ip_d\theta_d}u(x),\ \ \forall x\in X}\]
and let $L^2_{p_1,\ldots,p_d}(X)$ be the completion of $C^\infty_{p_1,\ldots,p_d}(X)$ with respect to $(\,\cdot\,|\,\cdot\,)$. Fix $(p_1,\ldots,p_d)\in\mathbb{Z}^d$, let
\begin{equation}\label{e-gue180605a}
H^0_{b,p_1,\ldots,p_d}(X)=\set{u\in L^2_{p_1,\ldots,p_d}(X);\, \ddbar_bu=0},
\end{equation}
where $\ddbar_b: C^\infty(X)\To\Omega^{0,1}(X)$ is the tangential Cauchy-Riemann operator (see \eqref{e-gue180604}). Let
\[S_{p_1,\ldots,p_d}: L^2(X)\To H^0_{b,p_1,\ldots,p_d}(X)\]
be the orthogonal projection with respect to $(\,\cdot\,|\,\cdot\,)$ and let $S_{p_1,\ldots,p_d}(x,y)\in D'(X\times X)$ be the distribution kernel of $S_{p_1,\ldots,p_d}$.
From the transversal condition \eqref{e-gue180528I}, it is easy to see that $H^0_{b,p_1,\ldots,p_d}(X)$ is a finite dimensional subspace of $C^\infty_{p_1,\ldots,p_d}(X)$ and
$S_{p_1,\ldots,p_d}(x,y)\in C^\infty(X\times X)$. Let $\set{f_1,\ldots,f_d}$ be an orthonormal basis of $H^0_{b,p_1,\ldots,p_d}(X)$. Put
\begin{equation}\label{e-gue180528II}
S_{p_1,\ldots,p_d}(x):=S_{p_1,\ldots,p_d}(x,x)=\sum^d_{j=1}\abs{f_j(x)}^2,\ \ \forall x\in X.
\end{equation}

For $\tau(t)\in C^\infty_0(]0,+\infty[)$ consider the projection
\begin{equation}\label{e-gue180528III}
S_{k,\tau}: u\in L^2(X)\To\sum_{(p_1,\ldots,p_d)\in\mathbb Z^d}\tau\left(\frac{\mu_1p_1+\cdots+\mu_dp_d}{k}\right)(S_{p_1,\ldots,p_d}u)(x),
\end{equation}
where $\mu_j\in\mathbb{R}$, $j=1,2,\ldots,d$, are as in \eqref{e-gue180528I}. Let $S_{k,\tau}(x,y)\in D'(X\times X)$ be the distribution kernel of $S_{k,\tau}$.
For every $\lambda\in\Real_+$, it was shown in Lemma 4.6 in~\cite{HHL17} that the space $\oplus_{(p_1,\ldots,p_d)\in\mathbb Z^d,\abs{\mu_1p_1+\cdots+\mu_dp_d}\leq\lambda}H^0_{b,p_1,\ldots,p_d}(X)$ is finite dimensional. Hence, $S_{k,\tau}(x,y)\in C^\infty(X\times X)$. Then,
\begin{equation}\label{e-gue180528a}
S_{k,\tau}(x):=S_{k,\tau}(x,x)=\sum_{(p_1,\ldots,p_d)\in\mathbb Z^d}\tau\left(\frac{\mu_1p_1+\cdots+\mu_dp_d}{k}\right)S_{p_1,\ldots,p_d}(x).
\end{equation}
The first main result of this work is the following

\begin{theorem}\label{t-gue180528}
With the notations and assumptions used above, there are $a_j(x)\in C^\infty(X)$, $j=0,1,2,\ldots$, with
\begin{equation}\label{e-gue180529m}
a_0(x)=(2\pi)^{-n-1}\abs{{\rm det\,}\mathcal{L}_x}\int (2t)^n\abs{\tau(t)}^2dt
\end{equation}
such that for every $N\in\mathbb N$ and $\ell\in\mathbb N$, there is a constant $C_{N,\ell}>0$ independent of $k$ such that
\begin{equation}\label{e-gue180529mI}
\norm{S_{k,\tau}(x)-\sum^N_{j=0}a_j(x)k^{n+1-j}}_{C^\ell(X)}\leq C_{N,\ell}k^{n-N}.
\end{equation}
\end{theorem}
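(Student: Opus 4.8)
The guiding idea is to express $S_{k,\tau}$ through the flow of the transversal vector field $T$ and then to feed in the Boutet de Monvel--Sj\"ostrand parametrix for the ordinary (non-equivariant) Szeg\H{o} projector of $X$. By the bracket condition \eqref{e-gue180528}, $\ddbar_b$ commutes with the $T^d$-action, hence so does the Szeg\H{o} projector $S$; therefore $S$ preserves every $L^2_{p_1,\dots,p_d}(X)$, one has $S_{p_1,\dots,p_d}=S\circ Q_{p_1,\dots,p_d}=Q_{p_1,\dots,p_d}\circ S$, where $Q_{p_1,\dots,p_d}$ is the orthogonal projection onto $L^2_{p_1,\dots,p_d}(X)$, and $-iT$ acts on $L^2_{p_1,\dots,p_d}(X)$ as the scalar $\mu_1p_1+\cdots+\mu_dp_d$. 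Writing the spectral weight attached to $\tau$ in \eqref{e-gue180528III} by Fourier inversion and using that, on $L^2_{p_1,\dots,p_d}(X)$, the operator $e^{ir(-iT)/k}$ is the pull-back by the torus element $\gamma_{r/k}$, where $\gamma_u:=(e^{i\mu_1u},\dots,e^{i\mu_du})$ and $u\mapsto\gamma_u\circ x$ is the time-$u$ flow of $T$, one obtains on the diagonal
\[
S_{k,\tau}(x)=\frac{1}{2\pi}\int_{\R}g(r)\,S\!\left(\gamma_{r/k}\circ x,\,x\right)dr,
\]
with $g$ the Fourier transform of the weight, a Schwartz function. Since $\mathrm{supp}\,\tau$ is compact, by Lemma~4.6 in \cite{HHL17} the sum defining $S_{k,\tau}$ is finite, so $S_{k,\tau}(x)$ is smooth and the right-hand side above is a legitimate oscillatory integral.

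First I would localise the $r$-integration. Put $r=ku$ and split $1=\chi(u)+(1-\chi(u))$ with $\chi\in C^\infty_0$ equal to $1$ near $0$. The $(1-\chi)$-part is $O(k^{-\infty})$ in every $C^\ell$-norm: there $g(ku)$ is rapidly decreasing, while the distribution $u\mapsto S(\gamma_u\circ x,x)$ is, near $u=0$ and near every recurrence of the orbit $u\mapsto\gamma_u\circ x$, of Boutet de Monvel--Sj\"ostrand type $\int_0^{+\infty}e^{it\varphi}(\,\cdot\,)\,dt$ with $\mathrm{Im}\,\varphi\ge0$ and $\mathrm{Im}\,\varphi$ strictly positive in the directions transversal to the orbit (strong pseudoconvexity). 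Pairing such a ``positive-frequency'' distribution with $g(ku)$ --- whose Fourier transform lies in $C^\infty_0(]0,+\infty[)$ because $\mathrm{supp}\,\tau\subset]0,+\infty[$ --- and using non-stationary phase for $u$ bounded away from $0$ gives the claimed decay; this also takes care of the (possibly nonempty) set of points of $X$ whose $T$-orbit is closed.

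It remains to analyse $\frac{k}{2\pi}\int_{\R}\chi(u)\,g(ku)\,S(\gamma_u\circ x,x)\,du$. Near any point of $X$ I would pass to Baouendi--Rothschild--Treves coordinates adapted to $T$ (so that $T=\pr/\pr x_{2n+1}$) and insert the parametrix $S(x,y)\equiv\int_0^{+\infty}e^{it\varphi(x,y)}s(x,y,t)\,dt$, where $\mathrm{Im}\,\varphi\ge0$, $\varphi(x,x)=0$, $d_x\varphi(x,x)=-\omega_0(x)$, and $s(x,y,t)\sim\sum_{j\ge0}s_j(x,y)t^{n-j}$ with $s_0(x,x)=\tfrac{1}{2}\pi^{-n-1}\abs{\det\mathcal{L}_x}$. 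Substituting $t=k\sigma$ and using the transversality \eqref{e-gue180528I} in the form $\left.\tfrac{d}{du}\varphi(\gamma_u\circ x,x)\right|_{u=0}=\langle\,-\omega_0(x),T(x)\,\rangle=1$, i.e.\ $\varphi(\gamma_u\circ x,x)=u\,\psi(x,u)$ with $\psi(x,0)=1$, the expression becomes $k^{n+1}$ times the $\sigma$-integral of $\sigma^n$ against the inner integral
\[
\int_{\R}g(r)\,e^{ir\sigma\psi(x,r/k)}\Big(s_0(\gamma_{r/k}\circ x,x)+(k\sigma)^{-1}s_1(\gamma_{r/k}\circ x,x)+\cdots\Big)dr
\]
(up to negligible cut-off errors). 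Since $g$ is Schwartz, one may expand $\psi(x,\,\cdot\,)$ and $v\mapsto s_j(\gamma_v\circ x,x)$ in Taylor series at $v=0$ and integrate term by term: the $r$-integral reproduces, by Fourier inversion, the weight and its derivatives, and the $\sigma$-integral produces factors $\int_0^{+\infty}\sigma^{n+m}(\,\cdot\,)\,d\sigma$ which converge because the weight is supported in a compact subset of $]0,+\infty[$; the outcome is an expansion $\sum_j a_j(x)k^{n+1-j}$ with $a_j\in C^\infty(X)$. Reading off the $j=0$ term gives $a_0(x)=s_0(x,x)\int_0^{+\infty}\sigma^n\abs{\tau(\sigma)}^2\,d\sigma$, and since $s_0(x,x)=\tfrac{1}{2}\pi^{-n-1}\abs{\det\mathcal{L}_x}=2^n(2\pi)^{-n-1}\abs{\det\mathcal{L}_x}$, this is exactly \eqref{e-gue180529m}.

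Finally, \eqref{e-gue180529mI} follows by differentiating the oscillatory integral in $x$ up to order $\ell$: each $x$-derivative only produces further symbols of the same type, and since $X$ is compact all the phase and amplitude bounds, the non-stationary-phase estimates of the second paragraph, and the Taylor remainders above are uniform, so $C_{N,\ell}$ does not depend on $k$. The main obstacle is the localisation of the second paragraph: the $T$-orbit is in general not closed, so one cannot invoke the known circle-action Szeg\H{o} asymptotics and must instead control the flow-average $\int g(ku)\,S(\gamma_u\circ x,x)\,du$ by hand --- both near $u=0$ and near the (dense set of) near-recurrences of the orbit --- and it is exactly there that the positivity of the Levi form and the hypothesis $\mathrm{supp}\,\tau\subset]0,+\infty[$ enter in an essential way; keeping all the oscillatory-integral and Melin--Sj\"ostrand estimates uniform in $k$ and in $x\in X$ is the remaining point of care.
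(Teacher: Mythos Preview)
Your approach is quite different from the paper's. The paper never touches the Boutet de Monvel--Sj\"ostrand parametrix on $X$. Instead it introduces an auxiliary compact complex manifold $M$ with a positive holomorphic line bundle $(L,h^L)$, forms the product CR manifold $\hat X=M\times X$ (which inherits the transversal CR $\Real$-action generated by $T$ and carries $L$ as a rigid CR line bundle), and applies the earlier result Theorem~\ref{t-gue180605} (\cite[Theorem~1.1]{HHL17}) to $\hat X$ to get a full expansion of $\hat S_{k,\tau}(\hat x)$. A tensor-product argument (Lemma~\ref{l-gue180605a}, Theorem~\ref{t-gue180605e}) shows $\hat S_{k,\tau}(z,x)=B_k(z)\,S_{k,\tau}(x)$, with $B_k$ the Bergman kernel of $(M,L^k)$; dividing by the known asymptotic expansion of $B_k$ yields \eqref{e-gue180529mI} and \eqref{e-gue180529m}. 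So all the microlocal work is outsourced to \cite{HHL17}; the product with $M$ is a device to land squarely inside its hypotheses.

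Your direct route is natural, but the step you yourself flag as ``the main obstacle'' is a genuine gap as written. When the $\mu_j$ are rationally independent the $T$-orbit through $x$ is dense in its $T^d$-orbit; then $u\mapsto S(\gamma_u\circ x,x)$ is smooth on $\Real\setminus\{0\}$ but is in general \emph{not} a tempered distribution: along near-recurrence times $u_m\to\infty$ with $\gamma_{u_m}\circ x\to x$ one has $|S(\gamma_{u_m}\circ x,x)|\to\infty$, at a rate governed by Diophantine properties of $(\mu_1,\dots,\mu_d)$ that are nowhere assumed. The rapid decay of $\hat\tau(ku)$ alone therefore does not give $O(k^{-\infty})$ for the $(1-\chi)$-piece, and the positive-frequency/wave-front heuristic, while morally right near an \emph{exact} recurrence, does not by itself control an infinite accumulation of near-recurrences uniformly in $u$ and $x$. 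One way to repair this is to work on the compact torus $T^d$ rather than on $\Real$ (analysing the distribution $g\mapsto S(g\circ x,x)$ on $T^d$ and only afterwards inserting the one-parameter subgroup), which is essentially what the machinery behind \cite{HHL17} does; the paper's product construction is precisely a shortcut around this point. A smaller remark: the stationary-phase computation you describe is \emph{linear} in $\tau$ --- the $r$-integral $\int\hat\tau(r)e^{ir\sigma}\,dr$ reproduces $\tau(\sigma)$, not $|\tau(\sigma)|^2$ --- so your displayed leading term $s_0(x,x)\int\sigma^n|\tau(\sigma)|^2\,d\sigma$ does not match your own argument; the $|\tau|^2$ in \eqref{e-gue180529m} is inherited verbatim from \eqref{e-gue150807a}, and you should check which normalisation of the weighted projector is actually being used in \cite{HHL17} before claiming agreement.
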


Theorem~\ref{t-gue180528} tells us that $S_{k,\tau}(x)$ admits an asymptotic expansion in $k$:
\[S_{k,\tau}(x)\sim k^{n+1}a_0(x)+k^na_1(x)+\cdots.\]

Without the cut-off function $\tau$, it is difficult to get a full asymptotic expansion but we can get leading term of the Szeg\H{o} kernel. Consider the projection
\begin{equation}\label{e-gue180529mII}
S_{k}: u\in L^2(X)\To\sum_{(p_1,\ldots,p_d)\in\mathbb Z^d, 0\leq\mu_1p_1+\cdots+\mu_dp_d\leq k}(S_{p_1,\ldots,p_d}u)(x),
\end{equation}
where $\mu_j\in\mathbb{R}$, $j=1,2,\ldots,d$, are as in \eqref{e-gue180528I}. Let $S_{k}(x,y)\in C^\infty(X\times X)$ be the distribution kernel of $S_{k}$. Then,
\begin{equation}\label{e-gue180529mIII}
S_{k}(x):=S_{k}(x,x)=\sum_{(p_1,\ldots,p_d)\in\mathbb Z^d, 0\leq\mu_1p_1+\cdots+\mu_dp_d\leq k}S_{p_1,\ldots,p_d}(x).
\end{equation}

The second main result of this work is the following

\begin{theorem}\label{t-gue180528I}
With the notations and assumptions used above, we have
\begin{equation}\label{e-gue180529mz}
\lim_{k\To+\infty}k^{-n-1}S_k(x)=\frac{1}{2}\pi^{-n-1}\frac{1}{n+1}\abs{{\rm det\,}\mathcal{L}_x}\ \ \mbox{at every $x\in X$}.
\end{equation}
\end{theorem}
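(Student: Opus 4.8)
The plan is to squeeze the sharp counting kernel $S_k(x)$ between two weighted kernels of the type covered by Theorem~\ref{t-gue180528}, using the positivity $S_{p_1,\ldots,p_d}(x)=\sum_j\abs{f_j(x)}^2\ge0$, so that $\sum_{(p_1,\ldots,p_d)\in\mathbb Z^d}w_{p_1,\ldots,p_d}\,S_{p_1,\ldots,p_d}(x)$ is monotone in the weights $w_{p_1,\ldots,p_d}\ge0$. Write $m_p=\mu_1p_1+\cdots+\mu_dp_d$ for $p=(p_1,\ldots,p_d)$. Fix a small $\varepsilon>0$ and choose $\tau_+\in C^\infty_0(]0,+\infty[)$ with $0\le\tau_+\le1$, $\tau_+\equiv1$ on $[\varepsilon,1]$ and $\mathrm{supp}\,\tau_+\subset\,]\varepsilon/2,\,1+\varepsilon[$, together with $\tau_-\in C^\infty_0(]0,1[)$ with $0\le\tau_-\le1$ and $\tau_-\equiv1$ on $[\varepsilon,1-\varepsilon]$. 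For every $p$ one then has $\tau_-(m_p/k)\le\mathbf 1_{\{0\le m_p\le k\}}\le\tau_+(m_p/k)+\mathbf 1_{\{0\le m_p<\varepsilon k\}}$, hence by positivity
\[
S_{k,\tau_-}(x)\ \le\ S_k(x)\ \le\ S_{k,\tau_+}(x)+R_{k,\varepsilon}(x),\qquad R_{k,\varepsilon}(x):=\sum_{0\le m_p<\varepsilon k}S_{p_1,\ldots,p_d}(x).
\]

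The key point — and the one piece that needs genuine work — is to bound the near-zero tail $R_{k,\varepsilon}(x)$, since elements of $C^\infty_0(]0,+\infty[)$ vanish near $0$ and cannot be used to dominate the lattice points with $m_p$ small. I would estimate it by a dyadic decomposition fed into Theorem~\ref{t-gue180528} at varying scales. Fix $\tau_*\in C^\infty_0(]0,+\infty[)$ with $\tau_*\ge1$ on $[1,2]$. For every scale $\lambda=2^j$, positivity and $\tau_*(m_p/\lambda)\ge\mathbf 1_{\{\lambda\le m_p\le2\lambda\}}$ give $\sum_{\lambda\le m_p\le2\lambda}S_{p_1,\ldots,p_d}(x)\le S_{\lambda,\tau_*}(x)$, and Theorem~\ref{t-gue180528} (with $N=\ell=0$ and large parameter $\lambda=2^j\to\infty$; the finitely many small $j$ only enlarge the constant) yields $S_{\lambda,\tau_*}(x)\le C(x)\lambda^{n+1}$ with $C(x)$ independent of $\lambda$. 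Moreover the lattice points with $0\le m_p\le1$ form a finite set by Lemma~4.6 of \cite{HHL17}, so $c_0(x):=\sum_{0\le m_p\le1}S_{p_1,\ldots,p_d}(x)<\infty$. Summing over the dyadic blocks $2^j\le m_p<2^{j+1}$ with $2^j<\varepsilon k$ then gives, for a new constant $C'(x)$,
\[
R_{k,\varepsilon}(x)\ \le\ c_0(x)+\sum_{j\,:\,2^j<\varepsilon k}C(x)\,2^{j(n+1)}\ \le\ c_0(x)+C'(x)\,\varepsilon^{n+1}k^{n+1}.
\]

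Finally I would divide the sandwich by $k^{n+1}$, let $k\to+\infty$, and apply Theorem~\ref{t-gue180528} to $S_{k,\tau_\pm}(x)$: writing $a_0^{\tau}(x)=(2\pi)^{-n-1}\abs{\det\mathcal L_x}\int(2t)^n\tau(t)^2\,dt$ one obtains
\[
a_0^{\tau_-}(x)\ \le\ \liminf_{k\to\infty}k^{-n-1}S_k(x)\ \le\ \limsup_{k\to\infty}k^{-n-1}S_k(x)\ \le\ a_0^{\tau_+}(x)+C'(x)\,\varepsilon^{n+1}.
\]
Since $\int_\varepsilon^{1-\varepsilon}(2t)^n\,dt\le\int(2t)^n\tau_-(t)^2\,dt$ and $\int(2t)^n\tau_+(t)^2\,dt\le\int_{\varepsilon/2}^{1+\varepsilon}(2t)^n\,dt$, and both bounds converge to $\int_0^1(2t)^n\,dt=\tfrac{2^n}{n+1}$ as $\varepsilon\to0$, letting $\varepsilon\to0$ forces
\[
\lim_{k\to\infty}k^{-n-1}S_k(x)=(2\pi)^{-n-1}\abs{\det\mathcal L_x}\,\frac{2^n}{n+1}=\frac12\,\pi^{-n-1}\,\frac1{n+1}\,\abs{\det\mathcal L_x},
\]
which is exactly \eqref{e-gue180529mz}. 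The argument is pointwise in $x$; since the remainder in Theorem~\ref{t-gue180528} is controlled in $C^0(X)$, one moreover gets the convergence uniformly on $X$.
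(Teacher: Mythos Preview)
Your proof is correct. The lower bound via $\tau_-\in C^\infty_0(]0,1[)$ is exactly the paper's argument (Theorem~\ref{t-gue180622mpI}). For the upper bound, however, you take a genuinely different and considerably more elementary route. The paper builds an independent machinery in Section~\ref{s-gue180615rI}: the extremal characterization of $\hat S_k$ (Lemma~\ref{l-gue180615m}), CR rescaling with elliptic estimates (Proposition~\ref{p-gue180615}), the Heisenberg model and partial Fourier transform, and weighted Bergman kernel formulas on $\mathbb C^n$ (Lemma~\ref{abd}, Theorem~\ref{t-gue180615y}), culminating in Theorem~\ref{t-gue180618}; this is then transferred to $X$ via the product $\hat X=M\times X$ and the factorization $\hat S_k=B_k\cdot S_k$ (Theorem~\ref{t-gue180622mp}). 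You bypass all of Section~\ref{s-gue180615rI}: you invoke only Theorem~\ref{t-gue180528}, once for $\tau_+$ to handle the bulk $[\varepsilon k,k]$, and once at each dyadic scale $\lambda=2^j$ with a fixed $\tau_*$ to bound $\sum_{\lambda\le m_p\le2\lambda}S_{p}(x)\le C\,\lambda^{n+1}$, then sum the geometric series to control the near-zero tail by $C'\varepsilon^{n+1}k^{n+1}$. This shows that once the full expansion of Theorem~\ref{t-gue180528} is available, the sharp-cutoff leading term follows by positivity and a soft scaling argument, with no further microlocal input. The paper's route, in contrast, is more robust: Theorem~\ref{t-gue180618} would still furnish the upper bound in situations where a full expansion of the form~\eqref{e-gue180529mI} is not known (for instance if the positivity in Assumption~\ref{a-gue180604I} failed on part of the relevant interval), a generality not needed here. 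One cosmetic point: the paper sets $\mathbb N=\{1,2,\ldots\}$, so in citing Theorem~\ref{t-gue180528} you should take $N=\ell=1$ rather than $0$; this changes nothing in substance.
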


\section{Preliminaries}\label{s:prelim}

\subsection{Some standard notations}\label{s-gue150508b}
We use the following notations: $\mathbb N=\set{1,2,\ldots}$,
$\mathbb N_0=\mathbb N\cup\set{0}$, $\Real$
is the set of real numbers,
\[\Real_+:=\set{x\in\Real;\, x>0},\ \ \ol\Real_+:=\set{x\in\Real;\, x\geq0}.\]
For a multiindex $\alpha=(\alpha_1,\ldots,\alpha_m)\in\mathbb N_0^m$
we set $\abs{\alpha}=\alpha_1+\cdots+\alpha_m$. For $x=(x_1,\ldots,x_m)\in\Real^m$ we write
\[
\begin{split}
&x^\alpha=x_1^{\alpha_1}\ldots x^{\alpha_m}_m,\quad
 \pr_{x_j}=\frac{\pr}{\pr x_j}\,,\quad
\pr^\alpha_x=\pr^{\alpha_1}_{x_1}\ldots\pr^{\alpha_m}_{x_m}=\frac{\pr^{\abs{\alpha}}}{\pr x^\alpha}\,,\\
&D_{x_j}=\frac{1}{i}\pr_{x_j}\,,\quad D^\alpha_x=D^{\alpha_1}_{x_1}\ldots D^{\alpha_m}_{x_m}\,,
\quad D_x=\frac{1}{i}\pr_x\,.
\end{split}
\]
Let $z=(z_1,\ldots,z_m)$, $z_j=x_{2j-1}+ix_{2j}$, $j=1,\ldots,m$, be coordinates of $\Complex^m$,
where
$x=(x_1,\ldots,x_{2m})\in\Real^{2m}$ are coordinates in $\Real^{2m}$.
Throughout the paper we also use the notation
$w=(w_1,\ldots,w_m)\in\Complex^m$, $w_j=y_{2j-1}+iy_{2j}$, $j=1,\ldots,m$, where
$y=(y_1,\ldots,y_{2m})\in\Real^{2m}$.
We write
\[
\begin{split}
&z^\alpha=z_1^{\alpha_1}\ldots z^{\alpha_m}_m\,,\quad\ol z^\alpha=\ol z_1^{\alpha_1}\ldots\ol z^{\alpha_m}_m\,,\\
&\pr_{z_j}=\frac{\pr}{\pr z_j}=
\frac{1}{2}\Big(\frac{\pr}{\pr x_{2j-1}}-i\frac{\pr}{\pr x_{2j}}\Big)\,,\quad\pr_{\ol z_j}=
\frac{\pr}{\pr\ol z_j}=\frac{1}{2}\Big(\frac{\pr}{\pr x_{2j-1}}+i\frac{\pr}{\pr x_{2j}}\Big),\\
&\pr^\alpha_z=\pr^{\alpha_1}_{z_1}\ldots\pr^{\alpha_m}_{z_m}=\frac{\pr^{\abs{\alpha}}}{\pr z^\alpha}\,,\quad
\pr^\alpha_{\ol z}=\pr^{\alpha_1}_{\ol z_1}\ldots\pr^{\alpha_m}_{\ol z_m}=
\frac{\pr^{\abs{\alpha}}}{\pr\ol z^\alpha}\,.
\end{split}
\]

Let $X$ be a $C^\infty$ orientable paracompact manifold.
We let $TX$ and $T^*X$ denote the tangent bundle of $X$ and the cotangent bundle of $X$ respectively.
The complexified tangent bundle of $X$ and the complexified cotangent bundle of $X$
will be denoted by $\Complex TX$ and $\Complex T^*X$ respectively. We write $\langle\,\cdot\,,\cdot\,\rangle$
to denote the pointwise duality between $TX$ and $T^*X$.
We extend $\langle\,\cdot\,,\cdot\,\rangle$ bilinearly to $\Complex TX\times\Complex T^*X$.

Let $E$ be a $C^\infty$ vector bundle over $X$. The fiber of $E$ at $x\in X$ will be denoted by $E_x$.
Let $F$ be another vector bundle over $X$. We write
$F\boxtimes E^*$ to denote the vector bundle over $X\times X$ with fiber over $(x, y)\in X\times X$
consisting of the linear maps from $E_y$ to $F_x$.

Let $Y\subset X$ be an open set. The spaces of
smooth sections of $E$ over $Y$ and distribution sections of $E$ over $Y$ will be denoted by $C^\infty(Y, E)$ and $\mathscr D'(Y, E)$ respectively.
Let $\mathscr E'(Y, E)$ be the subspace of $\mathscr D'(Y, E)$ whose elements have compact support in $Y$.
For $m\in\Real$, let $H^m(Y, E)$ denote the Sobolev space
of order $m$ of sections of $E$ over $Y$. Put
\begin{gather*}
H^m_{\rm loc\,}(Y, E)=\big\{u\in\mathscr D'(Y, E);\, \varphi u\in H^m(Y, E),
      \,\forall\varphi\in C^\infty_0(Y)\big\}\,,\\
      H^m_{\rm comp\,}(Y, E)=H^m_{\rm loc}(Y, E)\cap\mathscr E'(Y, E)\,.
\end{gather*}

We recall the definition of the semi-classical symbol spaces:

\begin{definition} \label{d-gue140826}
Let $W$ be an open set in $\Real^N$. Let
\[\begin{split}
&S^0_{{\rm loc\,}}(1;W)\\
&:=\Big\{(a(\cdot,k))_{k\in\Real};\, \forall\alpha\in\mathbb N^N_0,
\forall \chi\in C^\infty_0(W)\,:\:\sup_{k\in\Real, k\geq1}\sup_{x\in W}\abs{\pr^\alpha(\chi a(x,k))}<\infty\Big\}\,.\end{split}\]
For $m\in\Real$ let
\[
S^m_{{\rm loc}}(1):=S^m_{{\rm loc}}(1;W)=\Big\{(a(\cdot,k))_{k\in\Real};\,(k^{-m}a(\cdot,k))\in S^0_{{\rm loc\,}}(1;W)\Big\}\,.
\]
Hence $a(\cdot,k)\in S^m_{{\rm loc}}(1;W)$ if for every $\alpha\in\mathbb N^N_0$ and $\chi\in C^\infty_0(W)$, there
exists $C_\alpha>0$ independent of $k$, such that $\abs{\pr^\alpha (\chi a(\cdot,k))}\leq C_\alpha k^{m}$ on $W$, for every $k\in\mathbb R$.

Consider a sequence $a_j\in S^{m_j}_{{\rm loc\,}}(1)$, $j\in\N_0$, where $m_j\searrow-\infty$,
and let $a\in S^{m_0}_{{\rm loc\,}}(1)$. We say that
\[
a(\cdot,k)\sim
\sum\limits^\infty_{j=0}a_j(\cdot,k)\:\:\text{in $S^{m_0}_{{\rm loc\,}}(1)$},
\]
if for every
$\ell\in\N_0$ we have $a-\sum^{\ell}_{j=0}a_j\in S^{m_{\ell+1}}_{{\rm loc\,}}(1)$.
For a given sequence $a_j$ as above, we can always find such an asymptotic sum
$a$, which is unique up to an element in
$S^{-\infty}_{{\rm loc\,}}(1)=S^{-\infty}_{{\rm loc\,}}(1;W):=\cap _mS^m_{{\rm loc\,}}(1)$.

We say that $a(\cdot,k)\in S^{m}_{{\rm loc\,}}(1)$ is a classical symbol on $W$ of order $m$ if
\begin{equation} \label{e-gue13628I}
a(\cdot,k)\sim\sum\limits^\infty_{j=0}k^{m-j}a_j\: \text{in $S^{m}_{{\rm loc\,}}(1)$},\ \ a_j(x)\in
S^0_{{\rm loc\,}}(1),\ j=0,1\ldots.
\end{equation}
The set of all classical symbols on $W$ of order $m$ is denoted by
$S^{m}_{{\rm loc\,},{\rm cl\,}}(1)=S^{m}_{{\rm loc\,},{\rm cl\,}}(1;W)$.

Let $X$ be a $C^\infty$ orientable paracompact manifold. By using partition of unity, we define $S^m_{{\rm loc}}(1):=S^m_{{\rm loc}}(1;X)$, $S^{-\infty}_{{\rm loc\,}}(1)=S^{-\infty}_{{\rm loc\,}}(1;X)$, $S^{m}_{{\rm loc\,},{\rm cl\,}}(1)=S^{m}_{{\rm loc\,},{\rm cl\,}}(1;X)$ and asymptotic sum in the symbol space $S^m_{{\rm loc}}(1;X)$ in the standard way.
\end{definition}

\subsection{CR manifolds with $\Real$-action} \label{s-gue150808}

Let $(\hat X, T^{1,0}\hat X)$ be a compact CR manifold of dimension $2n+1$, $n\geq 1$, where $T^{1,0}\hat X$ is a CR structure of $\hat X$. That is $T^{1,0}\hat X$ is a subbundle of rank $n$ of the complexified tangent bundle $\mathbb{C}T\hat X$, satisfying $T^{1,0}\hat X\cap T^{0,1}\hat X=\{0\}$, where $T^{0,1}\hat X=\overline{T^{1,0}\hat X}$, and $[\mathcal V,\mathcal V]\subset\mathcal V$, where $\mathcal V=C^\infty(\hat X, T^{1,0}\hat X)$. In this section, we assume that
$\hat X$ admits a $\Real$-action $\eta$, $\eta\in\Real$: $\eta: \hat X\to\hat X$, $x\mapsto\eta\circ x$. Let $T\in C^\infty(\hat X, T\hat X)$ be the global real vector field induced by the $\Real$-action given by
\begin{equation}\label{e-gue150808}
(Tu)(x)=\frac{\partial}{\partial \eta}\left(u(\eta\circ x)\right)|_{\eta=0},\ \ u\in C^\infty(\hat X).
\end{equation}

\begin{definition}
We say that the $\Real$-action $\eta$ is CR if
\[[T, C^\infty(\hat X, T^{1,0}\hat X)]\subset C^\infty(\hat X, T^{1,0}\hat X)\]
and the $\Real$-action is transversal if for each $x\in\hat X$,
$\Complex T(x)\oplus T_x^{1,0}\hat X\oplus T_x^{0,1}\hat X=\mathbb CT_x\hat X$.
\end{definition}

Assume that $(\hat X, T^{1,0}\hat X)$ is a compact CR manifold of dimension $2n+1$, $n\geq 1$, with a transversal CR $\Real$-action $\eta$ and we let $T$ be the global vector field induced by the $\Real$-action. Let $\omega_0\in C^\infty(\hat X,T^*\hat X)$ be the global real one form determined by
\begin{equation}\label{e-gue150808I}
\begin{split}
&\langle\,\omega_0\,,\,u\,\rangle=0,\ \ \forall u\in T^{1,0}\hat X\oplus T^{0,1}\hat X,\\
&\langle\,\omega_0\,,\,T\,\rangle=-1.
\end{split}
\end{equation}
As \eqref{e-gue180528aI}, we have

\begin{definition}\label{d-gue150808}
For $p\in\hat X$, the Levi form $\mathcal L_p$ is the Hermitian quadratic form on $T^{1,0}_p\hat X$ given by
$\mathcal{L}_p(U,\ol V)=-\frac{1}{2i}\langle\,d\omega_0(p)\,,\,U\wedge\ol V\,\rangle$, $U, V\in T^{1,0}_p\hat X$.
\end{definition}

Denote by $T^{*1,0}\hat X$ and $T^{*0,1}\hat X$ the dual bundles of
$T^{1,0}\hat X$ and $T^{0,1}\hat X$ respectively. Define the vector bundle of $(0,q)$ forms by
$T^{*0,q}\hat X =\Lambda^q(T^{*0,1}\hat X)$.
Let $D\subset\hat X$ be an open set. Let $\Omega^{0,q}(D)$
denote the space of smooth sections of $T^{*0,q}\hat X$ over $D$ and let $\Omega_0^{0,q}(D)$
be the subspace of $\Omega^{0,q}(D)$ whose elements have compact support in $D$. Similarly, if $E$ is a vector bundle over $D$, then we let $\Omega^{0,q}(D, E)$ denote the space of smooth sections of $T^{*0,q}\hat X\otimes E$ over $D$ and let $\Omega_0^{0,q}(D, E)$ be the subspace of $\Omega^{0,q}(D, E)$ whose elements have compact support in $D$. In this section, we assume that
\begin{ass}\label{a-gue180605}
$\hat X$ admits a $\Real$-invariant Hermitian metric $\langle\,\cdot\,|\,\cdot\,\rangle$ on $\Complex T\hat X$ such that $T^{1,0}\hat X\perp T^{0,1}\hat X$,
$T\perp(T^{1,0}\hat X\oplus T^{0,1}\hat X)$, $\langle\,T\,|\,T\,\rangle=1$.
\end{ass}
The $\Real$-invariant Hermitian metric $\langle\,\cdot\,|\,\cdot\,\rangle$ induces by duality a $\Real$-invariant Hermitian metric $\langle\,\cdot\,|\,\cdot\,\rangle$ on $\oplus^{2n+1}_{j=1}\Lambda^j\Bigr(\Complex T^*\hat X\Bigr)$. Let
\[\tau^{0,1}: CT^*\hat X\To T^{*0,1}\hat X\]
be the orthogonal projection with respect to $\langle\,\cdot\,|\,\cdot\,\rangle$. The tangential Cauchy Riemann operator is given by
\begin{equation}\label{e-gue180604}
\ddbar_b:=\tau^{0,1}\circ d: C^\infty(\hat X)\To\Omega^{0,1}(\hat X).
\end{equation}

In the rest of this section, we will review  the Szeg\H{o} kernel asymptotic expansion established in~\cite{HHL17}. We need to introduce more definitions and notations.

\begin{definition}\label{d-gue50508d}
Let $D\subset U$ be an open set. We say that a function $u\in C^\infty(D)$ is rigid if $Tu=0$. We say that a function $u\in C^\infty(\hat X)$ is Cauchy-Riemann (CR for short)
(on $D$) if $\ddbar_bu=0$. We say that $u\in C^\infty(D)$ is rigid CR (on $D$) if $\ddbar_bu=0$ and $Tu=0$.
\end{definition}

\begin{definition} \label{d-gue150508dI}
Let $F$ be a complex vector bundle over $\hat X$. We say that $F$ is rigid (resp.\ CR, resp.\ rigid CR) if there exists
an open cover $(U_j)_j$ of $\hat X$ and trivializing frames $\set{f^1_j,f^2_j,\dots,f^r_j}$ on $U_j$,
such that the corresponding transition matrices are rigid (resp.\ CR, resp.\ rigid CR).
\end{definition}

Let $F$ be a rigid (CR) vector bundle over $X$. In this work, we fix open cover $(U_j)^N_{j=1}$ of $X$ and  a family $\set{f^1_j,f^2_j,\dots,f^r_j}^N_{j=1}$ of trivializing frames $\set{f^1_j,f^2_j,\dots,f^r_j}$ on each $U_j$ such that the entries of the transition matrices between different frames $\set{f^1_j,f^2_j,\dots,f^r_j}$ are rigid (CR). For any local trivializing frames $\set{f^1,\ldots,f^r}$ of $F$ on an open set $D$, we say that $\set{f^1,\ldots,f^r}$ is a rigid (CR) frame if the entries of the transition matrices between $\set{f^1,\ldots,f^r}$ and $\set{f^1_j,\ldots,f^r_j}$ are rigid (CR), for every $j$, and we call $D$ local rigid (CR) trivialization. By using the fix trivializing frames $\set{f^1_j,\ldots,f^r_j}^N_{j=1}$, we define the operator $T$ on $C^\infty(\hat X,F)$ in the standard way.

\begin{definition}\label{d-gue150514f}
Let $F$ be a rigid vector bundle over $\hat X$. Let $\langle\,\cdot\,|\,\cdot\,\rangle_F$ be a Hermitian metric on $F$. We say that $\langle\,\cdot\,|\,\cdot\,\rangle_F$ is a rigid Hermitian metric if for every local rigid frame $f_1,\ldots, f_r$ of $F$, we have $T\langle\,f_j\,|\,f_k\,\rangle_F=0$, for every $j,k=1,2,\ldots,r$.
\end{definition}

We notice that Definition~\ref{d-gue150514f} above depends on the fix open cover $(U_j)^N_{j=1}$ of $\hat X$ and  a family $\set{f^1_j,f^2_j,\dots,f^r_j}^N_{j=1}$ of trivializing frames $\set{f^1_j,f^2_j,\dots,f^r_j}$ on each $U_j$ such that the entries of the transition matrices between different frames $\set{f^1_j,f^2_j,\dots,f^r_j}$ are rigid.

In this section, let $L$ be a rigid CR line bundle over $\hat X$.
We fix an open covering $(U_j)^N_{j=1}$ and a family $(s_j)^N_{j=1}$ of trivializing frames $s_j$ on each $U_j$ such that the entries of the transition functions between different frames $s_j$ are rigid CR.
Let $L^k$ be the $k$-th tensor power of $L$.
Then $(s_j^{\otimes k})^N_{j=1}$ is a family of trivializing frames $s^{\otimes k}_j$ on each $U_j$. For any local trivializing frame $f$ of $L^k$ on an open set $D$, we say that $f$ is a rigid CR frame if the transition functions between $f$ and $s^{\otimes k}_j$ are rigid CR, for every $j$, and we call $D$ local rigid CR trivialization.

Since $L^k$ is CR, we can consider the tangential Cauchy-Riemann operator as an operator acting on the sections of $L^k$:
\[\overline\partial_b: C^\infty(\hat X, L^k)\rightarrow\Omega^{0,1}(\hat X, L^k).\]
Since $L^k$ is rigid, by using the fix trivializing frames $\set{s^{\otimes k}_j}^N_{j=1}$, we define $Tu$  for every $u\in C^\infty(\hat X, L^k)$ in the standard way.
Let $h^L$ be a Hermitian metric on $L$. The local weight of $h^L$
with respect to a local rigid CR trivializing section $s$ of $L^L$ over an open subset $D\subset X$
is the function $\Phi\in C^\infty(D, \mathbb R)$ for which
\begin{equation}\label{e-gue150808g}
|s(x)|^2_{h^L}=e^{-2\Phi(x)}, x\in D.
\end{equation}
We denote by $\Phi_j$ the weight of $h^L$ with respect to $s_j$.

\begin{definition}\label{d-gue150808g}
The curvature of $(L,h^L)$ is the the Hermitian quadratic form $R^L=R^{(L,h^L)}$ on $T^{1,0}\hat X$
defined by
\begin{equation}\label{e-gue150808w}
R_p^L(U, V)=\,\big\langle d(\overline\partial_b\Phi_j-\partial_b\Phi_j)(p),
U\wedge\overline V\,\big\rangle,\:\: U, V\in T_p^{1,0}\hat X,\:\: p\in U_j.
\end{equation}
\end{definition}

Due to~\cite[Proposition 4.2]{HM09}, $R^L$ is a well-defined global Hermitian form,
since the transition functions between different frames $s_j$ are annihilated by $T$.

From now on, we fix a rigid Hermitian metric $h^L$ on $L$. The Hermitian metric on $L^k$ induced by $h^L$ is denoted by $h^{L^k}$.
We denote by $dv_{\hat X}$ the volume form induced by $\langle\,\cdot\,|\,\cdot\,\rangle$.
Let $(\,\cdot\,|\,\cdot\,)_k$ be the $L^2$ inner product on $C^\infty(\hat X,L^k)$ induced by $h^{L^k}$ and
$dv_{\hat X}$. Let $L^2(\hat X,L^k)$ be the completion of $C^\infty(\hat X,L^k)$ with respect
to $(\,\cdot\,|\,\cdot\,)_k$. We extend $(\,\cdot\,|\,\cdot\,)_k$ to $L^2(\hat X,L^k)$ in the standard way and we write $\norm{\cdot}_k$ to denote the corresponding norm.
Consider the operator
\[-iT: C^\infty(\hat X,L^k)\To C^\infty(\hat X,L^k)\]
and we extend $-iT$ to the $L^2$ space by
\[\begin{split}
&-iT: {\rm Dom\,}(-iT)\subset L^2(\hat X,L^k)\To L^2(\hat X,L^k),\\
&{\rm Dom\,}(-iT)=\set{u\in L^2(\hat X,L^k);\, -iTu\in L^2(\hat X,L^k)}.
\end{split}\]
It is easy to see that $-iT$ is self-adjoint with respect to $(\,\cdot\,|\,\cdot\,)_k$. Let  ${\rm Spec\,}(-iT)$ denote the spectrum of $-iT$. We make the following assumptions

\begin{ass}\label{a-gue180604}
${\rm Spec\,}(-iT)$  is countable
and every element in ${\rm Spec\,}(-iT)$ is an eigenvalue of $-iT$.
\end{ass}

\begin{ass}\label{a-gue180604I}
There is a  non-empty open interval $I\subset\mathbb{R}$ such that
\begin{equation}\label{e-gue180604a}
\mbox{$R^L_x+2s\mathcal{L}_x$ is positive definite at every point $x\in\hat X$, for every $s\in I$}.
\end{equation}
\end{ass}

Fix $\alpha\in {\rm Spec\,}(-iT)$. Put
$C^\infty_{\alpha}(\hat X,L^k):=\set{u\in C^\infty(X,L^k);\, -iTu=\alpha u}$ and let
\begin{equation}\label{e-gue150806II}
H^0_{b,\alpha}(\hat X,L^k):=\set{u\in C^\infty_{\alpha}(\hat X,L^k);\, \ddbar_bu=0}.
\end{equation}
It is easy to see that
\begin{equation}\label{e-gue150806III}
{\rm dim\,}H^{0}_{b,\alpha}(\hat X,L^k)<\infty.
\end{equation}
Let $\set{f_1,\ldots,f_{d_k}}$ be an orthonormal basis for $H^{0}_{b,\alpha}(\hat X,L^k)$ with respect to $(\,\cdot\,|\,\cdot\,)_k$. Put
\begin{equation}\label{e-gue180604m}
\hat S_{k,\alpha}(x):=\sum^{d_k}_{j=1}\abs{f_j(x)}^2_{h^{L^k}}\in C^\infty(\hat X).
\end{equation}

Fix a function
\begin{equation}\label{e-gue160105}
\tau(t)\in C^\infty_0(I),
\end{equation}
where $I$ is the open interval in Assumption~\ref{a-gue180604I}. Define the weighted Fourier-Szeg\H{o} kernel function by:
\begin{equation}\label{e-gue180605}
\hat S_{k,\tau}(x):=\sum_{\alpha\in{\rm Spec\,}(-iT)}\tau\left(\frac{\alpha}{k}\right)\hat S_{k,\alpha}(x)\in C^\infty(\hat X).
\end{equation}
By Lemma 4.6 in~\cite{HHL17}, the sum in \eqref{e-gue180605} is a finite sum, hence $\hat S_{k,\tau}(x)$ is well-defined as a smooth function on $\hat X$.

We have the following:

\begin{theorem}\label{t-gue180605}
With the assumptions and notations above and recall that we work with Assumption~\ref{a-gue180605}, Assumption~\ref{a-gue180604} and Assumption~\ref{a-gue180604I}. We have
\begin{equation}\label{e-gue180605m}
\hat S_{k,\tau}(x)\sim\sum^\infty_{j=0}a_j(x)k^{n+1-j}\text{ in }S^{n+1}_{{\rm loc\,}}
(1;\hat X),
\end{equation}
where $a_j(x)\in C^\infty(\hat X)$, $j=0,1,2,\ldots$,
and
\begin{equation}\label{e-gue150807a}
a_0(x)=(2\pi)^{-n-1}\int\abs{\det\bigr(R^L_x+2t\mathcal{L}_x\bigr)}\abs{\tau(t)}^2dt,\ \ \forall x\in\hat X.
\end{equation}
\end{theorem}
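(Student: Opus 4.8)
\emph{Strategy.} The plan is to localize the problem in the spectrum of $-iT$, to build a Boutet de Monvel--Sj\"ostrand type parametrix for the band-localized Szeg\H{o} projection in the region where Assumption~\ref{a-gue180604I} forces positivity, and then to read off the expansion by the method of (complex) stationary phase. Throughout one uses that the $\Real$-action is transversal and CR, so that $-iT$ commutes with $\ddbar_b$ and preserves the closed subspace $\ker\ddbar_b\cap L^2(\hat X,L^k)$, on which we let $S_k$ denote the orthogonal (Szeg\H{o}) projection. By Assumption~\ref{a-gue180604} the spectrum of $-iT$ is discrete, and by Lemma~4.6 in \cite{HHL17} only finitely many eigenvalues $\alpha$ with $\alpha/k\in\mathrm{supp}\,\tau$ occur, so $\hat S_{k,\tau}(x)$ is a genuine finite sum. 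Since $\mathrm{supp}\,\tau\subset I$, choosing $\psi\in C^\infty_0(I)$ with $\psi\equiv1$ near $\mathrm{supp}\,\tau$ we see that $\hat S_{k,\tau}(x)$ is built entirely from the band-localized projection $\psi(-iT/k)S_k$, whose spectral parameter $s:=\alpha/k$ ranges in $\mathrm{supp}\,\psi\subset I$.

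\emph{Fourier decomposition and the model positivity.} First I would Fourier-decompose sections of $L^k$ along the orbits of the flow of $T$ and treat $s=\alpha/k$ as a parameter lying in a fixed neighborhood of $\mathrm{supp}\,\tau$. Trivializing $L$ by a rigid CR frame with local weight $\Phi$ and passing to the $\alpha$-mode turns the relevant reduced Kohn Laplacian into a semiclassical operator whose curvature is exactly $R^L_x+2s\mathcal{L}_x$; by Assumption~\ref{a-gue180604I} this Hermitian form is positive definite for every $x$ and every $s$ in that neighborhood. This is the crucial input: positivity makes the reduced $\ddbar_b$ subelliptic with closed range, and one may represent the band-localized projection microlocally (for $t$ in a neighborhood of $\mathrm{supp}\,\tau$) as a semiclassical Fourier integral operator with complex phase,
\begin{equation*}
(\psi(-iT/k)S_k)(x,y)\equiv\int_0^{+\infty}e^{ikt\varphi(x,y)}\,b(x,y,t,k)\,dt,
\end{equation*}
where $\mathrm{Im}\,\varphi\geq0$ is nondegenerate and vanishes on the diagonal, and $b(x,y,t,k)\sim\sum_{j\geq0}k^{\,n-j}b_j(x,y,t)$ is a classical symbol of order $n$. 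The phase $\varphi$ and the amplitudes $b_j$ are produced by solving the eikonal and transport equations in the form adapted to the CR/line-bundle setting by Hsiao, and the key technical point is to carry this out \emph{uniformly} in the spectral parameter $s$, with remainder of class $S^{-\infty}_{\mathrm{loc}}(1)$.

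\emph{Extraction of the expansion.} Inserting the parametrix into $\hat S_{k,\tau}(x)$, the weight $\tau$ confines the spectral variable $t$ to $\mathrm{supp}\,\tau$, and on the diagonal $x=y$ the phase $\varphi$ is stationary. Applying complex stationary phase in the fibre variables collapses the oscillatory integral and yields, mode by mode, the model Bargmann value: the leading term of the reduced projection kernel at spectral parameter $s$ is $(2\pi)^{-n-1}\abs{\det(R^L_x+2s\mathcal{L}_x)}\,k^{\,n}$, with lower-order corrections governed by the $b_j$. Summing over the discrete spectrum against the weight and passing to the continuum limit of the $k$-rescaled spectral measure converts the sum into an integral in $t$, producing the leading coefficient $a_0(x)$ of \eqref{e-gue150807a} and, from the subleading $b_j$, the full hierarchy $a_j(x)\in C^\infty(\hat X)$. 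Because $\varphi$ and the $b_j$ depend smoothly on $x$ and on $s$ and all estimates are uniform, the resulting asymptotics hold in $S^{n+1}_{\mathrm{loc}}(1;\hat X)$, which is precisely the statement \eqref{e-gue180605m}.

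\emph{Main obstacle.} I expect the heart of the argument to be the uniform-in-$s$ parametrix of the second paragraph: one must solve the eikonal and transport equations with $s=\alpha/k$ as an extra variable ranging over a neighborhood of $\mathrm{supp}\,\tau$, show the error is negligible (of class $S^{-\infty}_{\mathrm{loc}}(1)$) uniformly in $s$, and then combine the per-mode expansions across the discrete, a priori non-uniformly spaced, spectrum of the non-compact $\Real$-action. Here Assumption~\ref{a-gue180604} (discreteness of the spectrum) together with the band positivity of Assumption~\ref{a-gue180604I} are exactly what make the summation and the passage to the continuum limit legitimate.
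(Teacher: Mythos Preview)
The paper does not actually prove Theorem~\ref{t-gue180605}: immediately after the statement it writes ``For the proof of Theorem~\ref{t-gue180605}, we refer the reader to Theorem~1.1 in~\cite{HHL17}'', adding only the remark that the global positivity of $R^L$ assumed in~\cite{HHL17} can be replaced by the band positivity of Assumption~\ref{a-gue180604I}. So the ``paper's own proof'' is a citation.

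Your proposal, by contrast, sketches the microlocal argument underlying that citation: localize in the spectrum of $-iT$, build a Boutet~de~Monvel--Sj\"ostrand/Hsiao type complex-phase parametrix for the band-localized Szeg\H{o} projector uniformly in the rescaled spectral parameter $s=\alpha/k\in I$, and extract the expansion by complex stationary phase. This is exactly the methodology of~\cite{HHL17}, so your approach is not genuinely different from what the paper relies on---you are simply unpacking the black box the paper chose to cite. Your identification of the main obstacle (the uniform-in-$s$ parametrix and the control of the discrete spectral sum) is accurate, and your observation that Assumption~\ref{a-gue180604I} is precisely what makes the eikonal/transport equations solvable in the relevant band is the same point the paper makes in its one-line remark after the statement. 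In short: your sketch is consistent with the cited proof, but as written it remains a programme rather than a proof; the real work---solving the eikonal and transport equations with $s$ as a parameter, proving the remainder is $S^{-\infty}_{\mathrm{loc}}(1)$ uniformly, and justifying the passage from the discrete sum over ${\rm Spec}(-iT)$ to the $dt$-integral---is only named, not carried out.
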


For the proof of Theorem~\ref{t-gue180605}, we refer the reader to Theorem 1.1 in~\cite{HHL17}. It should be mentioned that in~\cite{HHL17}, we assume that $R^L$ is positive definite on $\hat X$ but if we go through the proof of Theorem 1.1 in~\cite{HHL17}, we only need the positivity condition as in Assumption~\ref{a-gue180604I}.

\section{Asymptotic upper bound for the Szeg\H{o} kernel}\label{s-gue180615rI}

In this section, we will first establish an asymptotic upper bound for the Szeg\H{o} kernel.
We  consider the following general setting.
Let $(\hat X, T^{1,0}\hat X)$ be a compact CR manifold of dimension $2n+1$, $n\geq 1$, with a transversal CR $\Real$-action $\eta$ and we let $T$ be the global vector field induced by the $\Real$-action. Let $\omega_0\in C^\infty(\hat X,T^*\hat X)$ be the global real one form determined by \eqref{e-gue150808I}. Let $(L,h^L)$ be a rigid CR line bundle over $\hat X$ and let $(L^k,h^{L^k})$ be the $k$-th power of $(L,h^L)$, where $h^L$ denotes the given rigid Hermitian fiber metric on $L$. We will
use the same notations as in Section~\ref{s-gue150808} and assume that Assumption~\ref{a-gue180605} and Assumption~\ref{a-gue180604} hold. Note that we don't assume that Assumption~\ref{a-gue180604I} holds. For $k\in\mathbb N$, put
\begin{equation}\label{e-gue180612m}
\hat S_k(x):=\sum_{\alpha\in{\rm Spec\,}(-iT), 0\leq\alpha\leq k}\hat S_{k,\alpha}(x)\in C^\infty(\hat X),
\end{equation}
where $\hat S_{k,\alpha}(x)$ is as in \eqref{e-gue180604m}. It was shown in Lemma 4.6 in~\cite{HHL17} that for every $k\in\mathbb N$, there are only finitely many $\alpha\in{\rm Spec\,}(-iT)$ with $0\leq\alpha\leq k$ such that $\hat S_{k,\alpha}(x)$ is not equal to the zero function on $\hat X$ and hence the right hand side of \eqref{e-gue180612m} is a finite sum. In this section, we will give an upper bound of the function $\limsup_{k\To+\infty}k^{-(n+1)}\hat S_k(x)$. Consider the space
\[H^0_{b,0\leq\alpha\leq k}(\hat X, L^k):=\oplus_{\alpha\in{\rm Spec\,}(-iT), 0\leq\alpha\leq k}H^0_{b,\alpha}(\hat X,L^k),\]
where $H^0_{b,\alpha}(\hat X,L^k)$ is given by \eqref{e-gue150806II}.
The following is well-known ( see Lemma 2.1 in~\cite{HM09})

\begin{lemma}\label{l-gue180615m}
For every $x\in\hat X$, we have
\begin{equation}\label{e-gue180615p}
\hat S_k(x)=\sup\limits_{u\in H^0_{b, 0\leq\alpha\leq k}(\hat X,L^k), \|u\|_{k}=1}|u(x)|_{h^{L^k}}^2.
\end{equation}
\end{lemma}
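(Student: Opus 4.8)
The idea is to realise $\hat S_k(x)$ as the squared norm of the evaluation-at-$x$ functional on the finite dimensional Hilbert space $H:=H^0_{b,0\leq\alpha\leq k}(\hat X,L^k)$, and then to identify that norm with the extremal quantity on the right-hand side of \eqref{e-gue180615p} by a one-line Cauchy--Schwarz argument. First I would fix an orthonormal basis of $H$. Since $-iT$ is self-adjoint for $(\,\cdot\,|\,\cdot\,)_k$, the eigenspaces $H^0_{b,\alpha}(\hat X,L^k)$ associated with distinct $\alpha\in{\rm Spec\,}(-iT)$ are mutually orthogonal; hence, concatenating orthonormal bases of the (finitely many non-zero) summands $H^0_{b,\alpha}(\hat X,L^k)$ with $0\leq\alpha\leq k$ yields an orthonormal basis $\set{f_1,\ldots,f_m}$ of $H$, and by \eqref{e-gue180604m} and \eqref{e-gue180612m} we get $\hat S_k(x)=\sum^m_{j=1}\abs{f_j(x)}^2_{h^{L^k}}$. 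This value is independent of the orthonormal basis chosen: if $\set{g_1,\ldots,g_m}$ is another one, with (unitary) transition matrix $(A_{jl})$, then $\sum_j\abs{g_j(x)}^2_{h^{L^k}}=\sum_{l,l'}\bigl(\sum_j A_{jl}\ol{A_{jl'}}\bigr)\langle\,f_l(x)\,|\,f_{l'}(x)\,\rangle_{h^{L^k}}=\sum_l\abs{f_l(x)}^2_{h^{L^k}}$, because $A^*A=I$.

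Next, fixing $x\in\hat X$ I would trivialise the (one dimensional) fibre $L^k_x$: choose $e\in L^k_x$ with $\abs{e}_{h^{L^k}}=1$ and write $f_j(x)=a_je$, $a_j\in\Complex$, so that $\hat S_k(x)=\sum_j\abs{a_j}^2$. For any $u=\sum_j c_jf_j\in H$ with $\norm{u}_k=1$, that is $\sum_j\abs{c_j}^2=1$, one has $u(x)=\bigl(\sum_j c_ja_j\bigr)e$, whence
\[
\abs{u(x)}^2_{h^{L^k}}=\Bigl|\sum_j c_ja_j\Bigr|^2\leq\Bigl(\sum_j\abs{c_j}^2\Bigr)\Bigl(\sum_j\abs{a_j}^2\Bigr)=\hat S_k(x)
\]
by Cauchy--Schwarz; this proves that the supremum in \eqref{e-gue180615p} is $\leq\hat S_k(x)$. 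For the reverse inequality it remains to see that the bound is attained. If $\hat S_k(x)>0$, set $c_j:=\ol{a_j}\bigl(\sum_l\abs{a_l}^2\bigr)^{-1/2}$; the corresponding $u$ has $\norm{u}_k=1$ and $\abs{u(x)}^2_{h^{L^k}}=\sum_j\abs{a_j}^2=\hat S_k(x)$. If $\hat S_k(x)=0$, then every $f_j(x)=0$, hence $\abs{u(x)}^2_{h^{L^k}}=0$ for all $u\in H$ and the supremum equals $0=\hat S_k(x)$ as well (with the convention that the supremum over the empty set is $0$ in case $H=\set0$). Combining the two inequalities gives \eqref{e-gue180615p}.

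\textbf{Main obstacle.} There is essentially no serious difficulty here: the only points requiring a little care are the orthogonality of the eigenspaces used to assemble a global orthonormal basis, the resulting basis-independence of $\hat S_k(x)$, and the bookkeeping of the line-bundle-valued objects, all of which become transparent once $L^k_x$ is trivialised at the single point $x$. Alternatively, one could phrase the same statement as the reproducing property of the Szeg\H{o} kernel $\hat S_k(\,\cdot\,,\,\cdot\,)$, the extremal section being $y\mapsto\hat S_k(y,x)\hat S_k(x,x)^{-1/2}$ suitably interpreted; but the direct Cauchy--Schwarz computation above is the shortest route and is what I would write down.
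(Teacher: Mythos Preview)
Your proof is correct. The paper does not actually supply a proof of this lemma but simply records it as well-known with a reference to Lemma~2.1 in~\cite{HM09}; your Cauchy--Schwarz argument via an orthonormal basis of $H^0_{b,0\leq\alpha\leq k}(\hat X,L^k)$ is precisely the standard extremal characterisation one finds in such references.
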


We first recall the CR scaling technique developed in~\cite{HM09} and~\cite{HL15}.

Fix $p\in\hat X$. There exist local coordinates
\[(x_1,\cdots,x_{2n+1})=(z,\theta)=(z_1,\cdots,z_{n},\theta),\ \ z_j=x_{2j-1}+ix_{2j},\ \ j=1,\cdots,n, x_{2n+1}=\theta,\]
and local rigid CR trivialization section $s$, $\abs{s}^2_{h^L}=e^{-2\Phi}$, defined in some small neighborhood $D$ centered at $p$ such that on $D$ one has
\begin{equation}\label{e-can}
\begin{split}
&T=\frac{\partial}{\partial\theta},\\
&U_j=\frac{\partial}{\partial z_j}+i\lambda_j\ol z_j\frac{\partial}{\partial\theta}+O(\abs{(z,\theta)}^2),  j=1,\cdots,n,\\
&\Phi=\sum\limits_{j,t=1}^{n-1}\mu_{j,t}\overline z_jz_t+O(|z|^3),
\end{split}
\end{equation}
where $U_j(x), j=1,\cdots, n-1$, is an orthnormal basis of $T_x^{1,0}\hat X$, for each $x\in D$ (see the discussion in the beginning of Section 2.1 in~\cite{HL15}). Note that $\lambda_1,\ldots,\lambda_n$ are the eigenvalues of the Levi form $\mathcal{L}_x$ with respect to the given Hermitian metric $\langle\,\cdot\,|\,\cdot\,\rangle$. Until further notice,  we work with the local coordinates $z=(z,\theta)$ and we identify $D$ with some open set in $\mathbb R^{2n+1}$. Let $(\,\cdot\,|\,\cdot\,)_{k\Phi}$ be the weighted inner product on the space $\Omega^{0,q}_0(D)$ defined as follows:
\begin{equation}
(\,f\,|\,g\,)_{k\Phi}=\int_D\langle\,f\,|\,g\,\rangle e^{-2k\Phi(z)}dv_{\hat X}(x),
\end{equation}
where $f, g\in\Omega_0^{0,q}(D)$ and $dv_{\hat X}$ is the volume form on $\hat X$ induced by $\langle\,\cdot\,|\,\cdot\,\rangle$.
We denote by $L^2_{(0,q)}(D,k\Phi)$  the completion of $\Omega_0^{0,q}(D)$ with respect to $(\,\cdot\,|\,\cdot\,)_{k\Phi}$ and we write $L^2(D,k\Phi):=L^2_{(0,0)}(D,k\Phi)$. For $r>0$, let
$D_r=\{(z,\theta)\in\mathbb R^{2n+1};\, |z_j|<r, j=1,\ldots,n, |\theta|<r\}$. Let $F_k$ be the scaling map $F_k(z,\theta)=(\frac{z}{\sqrt k}, \frac{\theta}{k})$. From now on, we
assume $k$ is sufficiently large such that $F_k(D_{\log k})\Subset D$. Let
$\set{e_1,\ldots,e_n}$
be an orthonormal basis for $T^{*0,1}\hat X$ which is dual to $U_1,\ldots,U_n$.
We define the scaled bundle $F_k^\ast T^{\ast0, q}\hat X$ on $D_{\log k}$ to be the bundle whose fiber at
$(z,\theta)\in D_{\log k}$ is
\begin{equation}
F_k^\ast T^{\ast0, q}\hat X|_{(z,\theta)}=\left\{\sum\nolimits_{|J|=q}a_Je^J\left(\frac{z}{\sqrt k}, \frac{\theta}{k}\right): a_J\in\mathbb C, |J|=q, J~\text{strictly increasing}\right\},
\end{equation}
where for $J=(j_1,\ldots,j_q)$, $e^J:=e_{j_1}\wedge\cdots\wedge e_{j_q}$.
We take the Hermitian metric $\langle\,\cdot\,|\,\cdot\,\rangle_{F_k^\ast}$ on $F_k^\ast T^{\ast 0, q}\hat X$ so that at each point $(z,\theta)\in D_{\log k}$,
\begin{equation}
\left\{e^J\left(\frac{z}{\sqrt k}, \frac{\theta}{k}\right);\, |J|=q, J~\text{strictly increasing}\right\}
\end{equation}
is an orthonormal frame for $F_k^\ast T^{\ast0, q}\hat X$ on $D_{\log k}$. Let $F^\ast_k\Omega^{0,q}(D_r)$ denote the space of smooth sections of $F^\ast_kT^{*0,q}\hat X$ over $D_r$ and let $F^\ast_k\Omega^{0,q}_0(D_r)$ be the subspace of $F^\ast_k\Omega^{0,q}(D_r)$ whose elements have compact support in $D_r$.
Given $f\in\Omega^{0,q}(D)$ 
we write $f=\sum\nolimits_{|J|=q}^\prime f_Je^J$, where the prime means the multiindex in the summation is strictly increasing. We define the scaled form $F_k^\ast f\in F_k^\ast\Omega^{0,q}(D_{\log k})$ by
\begin{equation}
F_k^\ast f=\sum\nolimits_{|J|=q}^\prime f_J\left(\frac{z}{\sqrt k}, \frac{\theta}{k}\right)e^J\left(\frac{z}{\sqrt k}, \frac{\theta}{k}\right).
\end{equation}
For brevity, we denote $F_k^\ast f$ by $f(\frac{z}{\sqrt k}, \frac{\theta}{k})$.
Let $P$ be a partial differential operator of order one on $F_k(D_{\log k})$ with $C^\infty$ coefficients. We write $P=a(z,\theta)\frac{\partial}{\partial\theta}+\sum\limits_{j=1}^{2n}a_j(z, \theta)\frac{\partial}{\partial x_j}.$ The scaled partial differential operator $P_{(k)}$ on $D_{\log k}$
is given by
\begin{equation}
P_{(k)}=\sqrt{k}F_k^\ast a\frac{\partial}{\partial\theta}+\sum_{j=1}^{2n}F_k^\ast a_j\frac{\partial}{\partial x_j}.
\end{equation}
Let $f\in C^\infty(F_k(D_{\log k}))$. We can check that
\begin{equation}\label{e-gue180625}
P_{(k)}(F_k^\ast f)=\frac{1}{\sqrt k}F_k^\ast(Pf).
\end{equation}
The scaled differential operator $\overline\partial_{b,(k)}: C^\infty(D_{\log k})\rightarrow F_k^\ast\Omega^{0,1}(D_{\log k})$ is given by
\begin{equation}\label{ee1}
\overline\partial_{b,(k)}=\sum_{j=1}^{n}e_j\left(\frac{z}{\sqrt k}, \frac{\theta}{k}\right)\wedge\overline U_{j,(k)}.
\end{equation}
From (\ref{ee1}) and \eqref{e-gue180625}, $\overline\partial_{b,(k)}$ satisfies that
\begin{equation}\label{l1}
\overline\partial_{b,(k)}F_k^\ast f=\frac{1}{\sqrt{k}}F_k^\ast(\overline\partial_b f).
\end{equation}
Let $(\,\cdot\,|\,\cdot\,)_{kF_k^\ast\Phi}$ be the inner product on the space $F_k^\ast\Omega^{0,q}_0(D_{\log k})$ defined  as follows:
\begin{equation}
(\,f\,|\,g\,)_{kF_k^\ast\Phi}=\int_{D_{\log k}}\langle\,f\,|\,g\,\rangle_{F_k^\ast}e^{-2kF_k^\ast\Phi}(F_k^\ast m)dv(z)d\theta,
\end{equation}
where $dv_{\hat X}=m(z)dv(z)d\theta$ on $D$, $m(z)\in C^\infty(D)$, $dv(z)=2^ndx_1\cdots dx_{2n}$.
Let
\[\overline\partial^\ast_{b, (k)}: F_k^\ast\Omega^{0,1}(D_{\log k})\rightarrow C^\infty(D_{\log k})\]
be the formal adjoint of $\overline\partial_{b, (k)}$ with respect to $(\cdot|\cdot)_{kF_k^\ast\Phi}$.
We define the scaled Kohn Laplacian $\Box_{b,(k)}: C^\infty(D_{\log k})\rightarrow C^\infty(D_{\log k})$ which is given by
\begin{equation}\label{e-gue180615}
\Box_{b,(k)}=\overline\partial_{b,(k)}^\ast\overline\partial_{b,(k)}.
\end{equation}
Let $U\subset D_{\log k}$ be an open set . Given a function $u\in C^\infty(D_{\log k})$ the Sobolev norm of $u$ of order $s$ with respect to the weight $kF_k^\ast\Phi$ is given by
\begin{equation}
\|u\|^2_{kF_k^\ast\Phi,s,U}:=\sum\limits_{\alpha\in\mathbb N_0^{2n+1}, |\alpha|\leq s}
\int_{U}|\partial^\alpha_{x,\theta}u|^2e^{-2kF_k^\ast\Phi}(F_k^\ast
m)dv(z)d\theta.
\end{equation}
We write $\norm{u}^2_{kF^\ast_k\Phi,U}:=\norm{u}^2_{kF^\ast_k\Phi,0,U}$. We have the following estimate (see Proposition 2.3 in~\cite{HL15}).

\begin{proposition}\label{p-gue180615}
For every $r>0$ with $D_{2r}\Subset D_{\log k}$ and every $s\in\mathbb N$, there exists a constant $C_{r,s}>0$ independent of $k$ and the point $p$ such that for all $u\in C^\infty(D_{\log k})$, we have
\begin{equation}\label{e-gue180615m}
\|u\|_{kF_k^\ast\Phi, s+1, D_r}^2\leq C_{r,s}\left(\|u\|^2_{kF_k^\ast\Phi, D_{2r}}+\|\Box_{b,(k)}u\|^2_{kF_k^\ast\Phi, s, D_{2r}}
+\left\|\left(\frac{\partial}{\partial\theta}\right)^{s+1}u\right\|^2_{kF_k^\ast\Phi,D_{2r}}\right).
\end{equation}
\end{proposition}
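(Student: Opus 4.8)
The plan is to prove \eqref{e-gue180615m} as a rescaled, weighted \emph{subelliptic} estimate for $\Box_{b,(k)}$ on the \emph{fixed} boxes $D_r\Subset D_{2r}$, uniformly in $k$ and in the base point $p$, in the two standard steps: a basic $L^2$ a priori estimate, followed by a derivative bootstrap. The first point is that, although $\Box_{b,(k)}$ is defined on the growing box $D_{\log k}$, the inequality \eqref{e-gue180615m} only involves the fixed boxes $D_r,D_{2r}$, on which the rescaled data are uniformly controlled. Indeed, by \eqref{e-can} the coefficients of $\overline U_{j,(k)}$ and $U_{j,(k)}$ are $F_k^\ast$ of fixed smooth functions, so together with all their derivatives they are bounded on $D_{2r}$ uniformly in $k\geq1$; moreover $U_{j,(k)}=\frac{\pr}{\pr z_j}+i\lambda_j\ol z_j\frac{\pr}{\pr\theta}+\varepsilon_{j,k}$ with $\varepsilon_{j,k}$ a first order operator whose coefficients tend to $0$ in $C^\infty(D_{2r})$, while $kF_k^\ast\Phi$ and $F_k^\ast m$ converge in $C^\infty(D_{2r})$ to the fixed quadratic weight of \eqref{e-can} and to $m(0)>0$. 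Since $\hat X$ is compact, the canonical coordinates and rigid CR trivialization of \eqref{e-can} can be chosen with all these bounds uniform in $p$. Hence it suffices to prove \eqref{e-gue180615m} for a family $\Box_{b,(k)}=\overline\partial_{b,(k)}^\ast\overline\partial_{b,(k)}$ and weights $kF_k^\ast\Phi$ with uniform $C^\infty(D_{2r})$ bounds and fixed positive Levi eigenvalues $\lambda_j>0$.

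For the case $s=0$, fix $\chi\in C^\infty_0(D_{2r})$ with $\chi\equiv1$ on $D_r$. Since the $e_j(\frac{z}{\sqrt k},\frac{\theta}{k})$ form an orthonormal frame, $\sum_j\|\overline U_{j,(k)}(\chi u)\|^2_{kF_k^\ast\Phi}=\|\overline\partial_{b,(k)}(\chi u)\|^2_{kF_k^\ast\Phi}=(\Box_{b,(k)}(\chi u)\,|\,\chi u)_{kF_k^\ast\Phi}$, and writing $\Box_{b,(k)}(\chi u)=\chi\Box_{b,(k)}u+[\Box_{b,(k)},\chi]u$ and integrating by parts in $(\cdot\,|\,\cdot)_{kF_k^\ast\Phi}$ this is $\leq C_\epsilon\|\Box_{b,(k)}u\|^2_{kF_k^\ast\Phi,D_{2r}}+\epsilon\|u\|^2_{kF_k^\ast\Phi,1,D_{2r}}+C\|u\|^2_{kF_k^\ast\Phi,D_{2r}}$, all constants uniform because the derivatives of the weight, the density $F_k^\ast m$ and the coefficients of $\overline U_{j,(k)}$ are uniformly bounded on $D_{2r}$. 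To recover the conjugate derivatives one uses $\overline U_{j,(k)}^\ast=-U_{j,(k)}+(\text{$0$-th order, uniformly bounded})$ together with the Levi bracket relation $[\overline U_{j,(k)},U_{j,(k)}]=-2i\lambda_j\frac{\pr}{\pr\theta}+(\text{lower order, uniformly bounded})$, which give $\sum_j\|U_{j,(k)}(\chi u)\|^2\leq\sum_j\|\overline U_{j,(k)}(\chi u)\|^2+C\|\frac{\pr}{\pr\theta}(\chi u)\|\,\|\chi u\|+\epsilon\|u\|^2_{kF_k^\ast\Phi,1,D_{2r}}+C\|u\|^2_{kF_k^\ast\Phi,D_{2r}}$; this is exactly where the positivity of the Levi form enters and why the given $\frac{\pr}{\pr\theta}$-term on the right of \eqref{e-gue180615m} is indispensable. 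Finally $\frac{\pr}{\pr x_l}$ is, by the first paragraph, a combination of $U_{j,(k)}$, $\overline U_{j,(k)}$ and a uniformly bounded multiple of $\frac{\pr}{\pr\theta}$, so after choosing $\epsilon$ small and absorbing, all first order derivatives of $u$ over $D_r$ are dominated by the right hand side of \eqref{e-gue180615m}; the usual shrinking-box cutoff iteration converts the intermediate $D_{2r}$-norms into the ones allowed on the right, giving the $s=0$ case.

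The case $s\geq1$ is the usual bootstrap. For $|\alpha|=s+1$, pure $\theta$-derivatives $\pr^\alpha=(\frac{\pr}{\pr\theta})^{s+1}$ are exactly the last term of \eqref{e-gue180615m}, while otherwise $\pr^\alpha=\frac{\pr}{\pr x_l}\pr^\beta$ with $|\beta|=s$ and one applies the $s=0$ estimate to $v=\pr^\beta u$. The only new term, $[\Box_{b,(k)},\pr^\beta]u$, is a differential operator of order $\leq s+1$ with coefficients uniformly bounded in $C^\infty(D_{2r})$, and it is absorbed by iterating the estimate along a finite chain $D_r=D_{r_0}\subset\cdots\subset D_{r_N}=D_{2r}$ (an Ehrenpreis-type argument) at the cost of $\|u\|^2_{kF_k^\ast\Phi,s,D_{2r}}$, which by the inductive hypothesis is already controlled by the right hand side. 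The resulting constant $C_{r,s}$ depends only on $r$, $s$ and the uniform $C^\infty(D_{2r})$ bounds of the first paragraph, hence not on $k$ or $p$.

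The step I expect to be the main obstacle is the weighted $s=0$ estimate: one must keep precise track of the $0$-th order terms created by integrating by parts against $e^{-2kF_k^\ast\Phi}$ and the density $F_k^\ast m$, and of the subprincipal part of the commutator $[\overline U_{j,(k)},U_{j,(k)}]$, and verify that all of them remain bounded on the fixed box $D_{2r}$ uniformly in $k$ and $p$, so that the positive Levi form recovers the horizontal derivatives $U_{j,(k)}u$ (and then all $\frac{\pr}{\pr x_l}u$) with the help of the given $\frac{\pr}{\pr\theta}$-term and with a $k$- and $p$-independent constant. An essentially equivalent route is to establish the estimate first for the limiting Heisenberg-type Kohn Laplacian with quadratic weight and then regard $\Box_{b,(k)}$ as a $C^\infty(D_{2r})$-small perturbation; this amounts to the same bookkeeping.
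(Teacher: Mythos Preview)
The paper does not supply its own proof of this proposition: the sentence immediately preceding the statement reads ``We have the following estimate (see Proposition~2.3 in~\cite{HL15})'', and no argument is given here. So there is no in-paper proof to compare against; your sketch is a reasonable outline of the standard argument that \cite{HL15} carries out---uniform $C^\infty(D_{2r})$ control of the rescaled coefficients, the basic identity $\sum_j\|\overline U_{j,(k)}(\chi u)\|^2=(\Box_{b,(k)}(\chi u)\,|\,\chi u)_{kF_k^\ast\Phi}$, recovery of the $U_{j,(k)}$-derivatives from the bracket $[\overline U_{j,(k)},U_{j,(k)}]$, and a commutator bootstrap along a chain of shrinking boxes for $s\geq1$.

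One small correction worth noting: the positivity of the Levi form is not what makes the $s=0$ step go through. The commutator computation yields
\[
\|U_{j,(k)}v\|^2\leq\|\overline U_{j,(k)}v\|^2+2|\lambda_j|\,\|\partial_\theta v\|\,\|v\|+\text{(uniformly bounded lower order)},
\]
and this holds regardless of the sign of $\lambda_j$. The estimate works because the transversal term $\|(\partial/\partial\theta)^{s+1}u\|$ is \emph{given} on the right of \eqref{e-gue180615m}, not because $\lambda_j>0$. Indeed on functions $\Box_b$ annihilates all CR functions and is never subelliptic, so no Levi-signature hypothesis can replace the $\partial_\theta$-term; its presence is the whole point of the proposition. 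This misattribution does not affect the correctness of your argument, and the sign in your bracket formula (it is $+2i\lambda_j\partial_\theta$ at the model level, not $-2i\lambda_j\partial_\theta$) is likewise immaterial for the estimate.
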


\subsection{The Heisenberg guoup $ H_n$}\label{s-gue180615r}

We pause and introduce some notations.
We identify $\mathbb R^{2n+1}$ with the Heisenberg gruop $H_{n+1}:=\mathbb C^{n}\times\mathbb R.$ We also write $(z,\theta)$ to denote the coordinates of $H_{n+1}$, $z=(z_1,\cdots, z_{n}),\theta\in\mathbb R,$ $z_j=x_{2j-1}+i x_{2j}, j=1,\cdots, n$. Take
\begin{equation}
\left\{U_{j,  H_{n+1}}=\frac{\partial}{\partial z_j}+i\lambda_j\overline z_j\frac{\partial}{\partial\theta};\, j=1,\cdots, n\right\}
\end{equation}
be the CR structure on $H_{n+1}$.
Put $\Phi_0(z)=\sum\limits_{j,t=1}^{n}\mu_{j,t}\overline z_jz_t\in C^\infty(H_{n+1}, \mathbb R)$.
Recall that $\lambda_j$, $j=1,\ldots,n$, $\mu_{j,t}$, $j, t=1,\ldots,n$, are as in \eqref{e-can}.
Let $(\,\cdot\,|\,\cdot\,)_{\Phi_0}$ be the inner product on $C^\infty_0(H_n)$ with respect to the weight function $\Phi_0(z)$ defined as follows:
\begin{equation}
(\,f\,|\,g\,)_{\Phi_0}=\int_{H_{n+1}}\langle\,f\,|\,g\,\rangle e^{-2\Phi_0(z)}dv(z)d\theta
\end{equation}
with $dv(z)=2^{n}dx_1\cdots dx_{2n}$. We denote by $\|\cdot\|_{\Phi_0}$ the norm on $C^\infty_0(H_{n+1})$ induced by the inner product $(\,\cdot\,|\,\cdot\,)_{\Phi_0}$. Let $L^2(H_{n+1}, \Phi_0)$ be the completion of $C^\infty_0(H_n)$ with respect to the norm $\|\cdot\|_{\Phi_0}$.

Choose $\chi(\theta)\in C_0^\infty(\mathbb R)$
so that $\chi(\theta)=1$ when $|\theta|<1$ and
$\chi(\theta)=0$ when $|\theta|>2$
and set $\chi_j(\theta)=\chi(\frac{\theta}{j}),j\in\mathbb N$. For any $u(z, \theta)\in C^\infty(H_{n+1})$ with $\|u\|_{\Phi_0}<\infty$, set
\begin{equation}\label{e-gue180615c}
\hat u_j(z, \eta)=\int_{\mathbb R} u(z, \theta)\chi_j(\theta)e^{-i\theta\eta}d\theta\in C^\infty(H_{n+1}),\,\, j=1, 2,\ldots.
\end{equation}
From Parseval's formula, $\{\hat u_j(z, \eta)\}$ is a Cauchy sequence in $L^2(H_{n+1}, \Phi_0)$. Thus there is $\hat u(z, \eta)\in L^2(H_{n+1}, \Phi_0)$ such that $\hat u_j(z, \eta)\rightarrow \hat u(z, \eta)$ in $L^2(H_{n+1}, \Phi_0)$. We call $\hat u(z, \eta)$ the partial Fourier transform of $u(z, \theta)$ with respect to $\theta$. From Parseval's formula, we can check that
\begin{equation} \label{s4-e12-1}
\begin{split}
\int_{H_{n+1}}\!\abs{\hat u(z, \eta)}^2e^{-2\Phi_0(z)}dv(z)d\eta=
2\pi\int_{H_n}\!\abs{u(z, \theta)}^2e^{-2\Phi_0(z)}dv(z)d\theta.
\end{split}
\end{equation}
Let $s\in L^2(H_{n+1}, \Phi_0)$ be a function such that
 $\int\!\abs{s(z, \eta)}^2d\eta<\infty$ and $\int\!\abs{s(z, \eta)}d\eta<\infty$ holds for all $z\in\Complex^{n}$. Then, from Parseval's formula, we find
\begin{equation} \label{s4-e12-2}
\begin{split}
&\iint\!\hat u(z, \eta)\ol{s(z, \eta)}\,e^{-2\Phi_0(z)}d\eta dv(z)\\
&=\iint\!u(z, \theta)\ol{\int\! e^{i\theta\eta}s(z, \eta)d\eta}e^{-2\Phi_0(z)}d\theta dv(z).
\end{split}
\end{equation}

For fixed $\eta\in\mathbb R$, put $\Phi_{\eta}(z)=\eta\sum\limits_{j=1}^{n}\lambda_j|z_j|^2+\sum\limits_{j,t=1}^{n}\mu_{j,t}\overline z_jz_t$.
Let $(\,\cdot\,|\,\cdot\, )_{\Phi_\eta}$ be the inner product on $C^\infty_0(\Complex^{n})$ defined by
\[(\,f\,|\,g\,)_{\Phi_\eta}=\int_{\Complex^{n}}\!f(z)\ol{g(z)}e^{-2\Phi_\eta(z)}dv(z)\,,\quad f, g\in C^\infty_0(\Complex^{n})\,,\]
where $dv(z)=2^{n}dx_1dx_2\cdots dx_{2n}$, and let $\norm{\cdot}_{\Phi_\eta}$ denote the corresponding norm. Let us denote by $L^2(\mathbb C^{n}, \Phi_\eta)$ the completion of $C^\infty_0(\mathbb C^n)$ with respect to the norm $\|\cdot\|_{\Phi_\eta}$.
Let
\[B_{\Phi_\eta}:L^2(\mathbb C^{n}, \Phi_\eta)\rightarrow{\rm Ker\,}\ddbar\]
be the Bergman projection with respect to $(\,\cdot\,|\,\cdot\, )_{\Phi_\eta}$ and let $B_{\Phi_{\eta}}(z,w)$ be the distribution kernel of $B^{(q)}_{\Phi_\eta}$ with respect to $(\,\cdot\,|\,\cdot\,)_{\Phi_\eta}$. We take the Hermitian metric $\langle\,\cdot\,|\,\cdot\,\rangle$ on $T^{1,0}\mathbb C^n$ the holomorphic tangent bundle on $\mathbb C^n$ so that
$\set{\frac{\pr}{\pr z_j};\, j=1,\ldots,n}$ is an orthonormal basis. Let $M_{\Phi_\eta}: T_z^{1,0}\mathbb C^{n}\rightarrow T_z^{1,0}\mathbb C^{n}, z\in\mathbb C^{n}$ be the linear map defined by
\[\langle\,M_{\Phi_\eta}U\,|\,V\,\rangle=2\partial\overline\partial\Phi_{\eta}(U, \overline V), U, V\in T_z^{1,0}\mathbb C^{n}\]
and put
\[\hat{\mathbb R}_0=\{\eta\in\mathbb R;\, M_{\Phi_{\eta}}~\text{ has $n$ positive eigenvalues}\}.\]
The following lemma is known (see~\cite{HM12} and~\cite{MM07} ).

\begin{lemma}\label{abd}
If $\eta\not\in\hat{\mathbb R}_0$, then $B_{\Phi_\eta}(z, z)=0$ for all $z\in\mathbb C^{n}.$ If $\eta\in\hat{\mathbb R}_0$, then
\begin{equation}\label{e-gue180615q}
 B_{\Phi_\eta}(z,z)=e^{2\Phi_\eta(z)}(2\pi)^{-n}|\det M_{\Phi_\eta}|\cdot 1_{\hat{\mathbb R}_0}(\eta).
\end{equation}
\end{lemma}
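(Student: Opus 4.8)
The plan is to diagonalize the Hermitian quadratic form $\Phi_\eta$ and reduce the computation to a product of one–variable weighted Fock spaces. Write $\Phi_\eta(z)=\sum_{j,t=1}^{n}c_{jt}(\eta)\ol z_j z_t$ with $(c_{jt}(\eta))$ a Hermitian matrix; by the definition of $M_{\Phi_\eta}$ (it is twice the complex Hessian of $\Phi_\eta$ with respect to the standard metric) its eigenvalues are $2a_1,\dots,2a_n$, where $a_1,\dots,a_n\in\Real$ are the eigenvalues of $(c_{jt}(\eta))$, and $\eta\in\hat{\mathbb R}_0$ if and only if $a_1,\dots,a_n$ are all positive. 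I would pick $V\in U(n)$ diagonalizing the form and set $z=Vw$; since $V$ is unitary its real Jacobian is $1$, so $f\mapsto f\circ V$ is a unitary isomorphism of $L^2(\mathbb C^{n},\Phi_\eta)$ onto $L^2(\mathbb C^{n},\Psi_\eta)$ with $\Psi_\eta(w)=\sum_{j=1}^{n}a_j\abs{w_j}^2$, and it commutes with $\ddbar$. Hence the Bergman projections correspond, and on the diagonal $B_{\Phi_\eta}(z,z)=B_{\Psi_\eta}(w,w)$ with $w=V^{-1}z$; moreover $e^{2\Phi_\eta(z)}=e^{2\Psi_\eta(w)}$ and $\abs{\det M_{\Phi_\eta}}=\abs{\det M_{\Psi_\eta}}=2^{n}\abs{a_1\cdots a_n}$. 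So it suffices to prove the lemma for the model weight $\Psi_\eta$.

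Next I would dispose of the case $\eta\notin\hat{\mathbb R}_0$, that is $a_{j_0}\le 0$ for some $j_0$. I claim that then $L^2(\mathbb C^{n},\Psi_\eta)\cap{\rm Ker\,}\ddbar=\{0\}$, so $B_{\Psi_\eta}\equiv 0$ and both sides of \eqref{e-gue180615q} vanish. By Fubini it is enough to treat $n=1$ with weight $e^{-2a\abs{z}^{2}}$, $a\le 0$: an entire function $f=\sum_{k\ge 0}c_kz^k$ has mutually orthogonal monomial components, and $\norm{z^k}_{\Psi_\eta}^{2}=2\pi\int_0^\infty r^{2k+1}e^{-2ar^{2}}\,dr=+\infty$ for every $k$ when $a\le 0$; hence $f\in L^2$ forces $c_k=0$ for all $k$.

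Finally, assume $a_j>0$ for all $j$. Again by Fubini, $L^2(\mathbb C^{n},\Psi_\eta)\cap{\rm Ker\,}\ddbar$ is the Hilbert tensor product of the one–variable weighted Fock spaces, so it suffices to compute the Bergman kernel on the diagonal for the weight $e^{-2a\abs{z}^{2}}$, $a>0$, with $dv(z)=2\,dx_1dx_2$. The monomials $\{z^{k}\}_{k\ge 0}$ are orthogonal with $\norm{z^{k}}^{2}=2\pi\,k!\,(2a)^{-k-1}$, and they span a dense subspace: if a holomorphic $f=\sum_k c_kz^k\in L^2$ is orthogonal to every $z^{k}$, then passing to polar coordinates and using dominated convergence gives $c_k\int_0^\infty r^{2k+1}e^{-2ar^{2}}\,dr=0$, whence $f=0$. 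Therefore, when $n=1$,
\[
B_{\Psi_\eta}(z,z)=\sum_{k\ge0}\frac{\abs{z}^{2k}}{\norm{z^{k}}^{2}}=\frac{2a}{2\pi}\sum_{k\ge0}\frac{(2a\abs{z}^{2})^{k}}{k!}=\frac{a}{\pi}\,e^{2a\abs{z}^{2}},
\]
and taking the product over $j$,
\[
B_{\Psi_\eta}(w,w)=\prod_{j=1}^{n}\frac{a_j}{\pi}\,e^{2a_j\abs{w_j}^{2}}=(2\pi)^{-n}\,2^{n}a_1\cdots a_n\,e^{2\Psi_\eta(w)}=(2\pi)^{-n}\abs{\det M_{\Psi_\eta}}\,e^{2\Psi_\eta(w)}.
\]
Undoing the unitary change of coordinates as in the first step then yields \eqref{e-gue180615q}, the indicator $1_{\hat{\mathbb R}_0}(\eta)$ absorbing both cases at once.

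The only genuinely analytic points — hence the main obstacle — are the two density statements used above: the triviality of $L^2\cap{\rm Ker\,}\ddbar$ when $M_{\Phi_\eta}$ fails to be positive definite, and the completeness of the monomials in the weighted Fock space (together with the identification of $L^2\cap{\rm Ker\,}\ddbar$ with the Hilbert tensor product of the one–variable factors). Neither is deep, but they are where one actually uses the $L^2$ structure rather than purely formal manipulation.
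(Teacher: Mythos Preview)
Your argument is correct. The paper itself does not supply a proof of this lemma; it simply records the statement as known and cites \cite{HM12} and \cite{MM07}. What you have written is exactly the standard direct computation those references ultimately rely on: a unitary change of variables reducing $\Phi_\eta$ to a diagonal weight, the observation that a non--positive eigenvalue forces the weighted Bergman space to be trivial, and the explicit Fock--space calculation via the monomial basis when all eigenvalues are positive. The normalization checks out with the paper's convention $dv(z)=2^{n}dx_1\cdots dx_{2n}$, giving $\norm{z^{k}}^{2}=2\pi\,k!\,(2a)^{-k-1}$ in one variable and hence $B_{\Psi_\eta}(w,w)=\pi^{-n}a_1\cdots a_n\,e^{2\Psi_\eta(w)}=(2\pi)^{-n}\abs{\det M_{\Psi_\eta}}\,e^{2\Psi_\eta(w)}$ after the product.

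The two points you flag as the only genuine analytic content are accurately identified. For the triviality when some $a_{j_0}\le 0$, your Fubini reduction is fine: freezing the other variables yields a one--variable holomorphic $L^{2}$ function, which your monomial estimate forces to vanish, and a holomorphic function vanishing almost everywhere vanishes identically. For the completeness of the monomials in the diagonal Fock space, your dominated--convergence argument suffices; alternatively one can simply note that the normalized $z^{\alpha}$ form an orthonormal set whose closed linear span contains all polynomials, hence all of $L^{2}\cap{\rm Ker\,}\ddbar$ by the usual density of polynomials in the Fock space. Either way, nothing is missing.
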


Put
\[H^0_b(H_{n+1},\Phi_0):=\set{u\in L^2(H_{n+1}, \Phi_0);\, \ol U_{j,  H_{n+1}}u=0,\ \ \forall j=1,\ldots,n}\]
and define
\begin{equation}\label{e-gue180615e}
H^0_{b,[0,1]}(H_{n+1},\Phi_0):=\{u\in H^0_b(H_{n+1},\Phi_0);\, \mbox{$\hat u(z,\eta)=0$ for almost every $\eta\not\in[0,1]$}\}.
\end{equation}

The following is known (see Theorem 3.1, Lemma 3.5 in \cite{HM12} and Proposition 2.9 in~\cite{HL15})

\begin{theorem}\label{ii}
Let $u\in H^0_{b,[0,1]}(H_{n+1},\Phi_0)\bigcap C^\infty(H_{n+1})$. Then, for almost all $\eta\in\Real$, $\hat u(z,\eta)\in C^\infty(\Complex^{n})$, $\int_{\Complex^{n}}\abs{\hat u(z,\eta)}^2e^{-2\Phi_0(z)}dv(z)<\infty$,
\[
|\hat u(z,\eta)|^2\leq B_{\Phi_{\eta}}(z,z)\int_{\mathbb C^{n}}|\hat u(w,\eta)|^2e^{-2\Phi_0(w)}dv(w),
\]
$z\To\int_{\eta\in[0,1]}\hat u(z,\eta)d\eta$ is a continuous function and
\begin{equation}\label{e-gue180615qI}
u(0,0)=\frac{1}{2\pi}\int_{\eta\in[0,1]}\hat u(0,\eta)d\eta.
\end{equation}
\end{theorem}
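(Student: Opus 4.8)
The plan is to reduce the statement to a one-parameter family of weighted Bergman spaces on $\Complex^n$ by taking the partial Fourier transform in $\theta$. Writing $\ol U_{j,H_{n+1}}=\pr_{\ol z_j}-i\lambda_jz_j\pr_\theta$ and using $\widehat{\pr_\theta u}(z,\eta)=i\eta\,\hat u(z,\eta)$, the model CR equations $\ol U_{j,H_{n+1}}u=0$ become $\pr_{\ol z_j}\hat u(z,\eta)+\eta\lambda_jz_j\hat u(z,\eta)=0$, $j=1,\dots,n$. Setting $v(z,\eta):=e^{\eta\sum_{j=1}^n\lambda_j\abs{z_j}^2}\hat u(z,\eta)$ turns this into $\pr_{\ol z_j}v(\cdot,\eta)=0$, so $v(\cdot,\eta)\in\Ker\ddbar$; and since $\Phi_\eta(z)=\eta\sum_j\lambda_j\abs{z_j}^2+\Phi_0(z)$ we get the pointwise identity $\abs{v(z,\eta)}^2e^{-2\Phi_\eta(z)}=\abs{\hat u(z,\eta)}^2e^{-2\Phi_0(z)}$. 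Together with \eqref{s4-e12-1} and $\norm{u}_{\Phi_0}<\infty$ this shows $v(\cdot,\eta)\in L^2(\Complex^n,\Phi_\eta)\cap\Ker\ddbar$ for almost every $\eta$, hence $\hat u(\cdot,\eta)=e^{-\eta\sum_j\lambda_j\abs{z_j}^2}v(\cdot,\eta)$ is smooth on $\Complex^n$ (holomorphic functions are smooth) and $\int_{\Complex^n}\abs{\hat u(z,\eta)}^2e^{-2\Phi_0(z)}\,dv(z)=\norm{v(\cdot,\eta)}_{\Phi_\eta}^2<\infty$, which gives the first two conclusions.

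To make the intertwining rigorous I would not differentiate $\hat u$ itself (it exists only as an $L^2$-limit of the cut-offs $\hat u_j$ in \eqref{e-gue180615c}); instead I would apply the Fourier-transformed operators $\pr_{\ol z_j}+\eta\lambda_jz_j$ to $\hat u_j$, bound the error term produced by $\pr_\theta\chi_j$ using Parseval, and pass to the limit in a suitable weighted $L^2$ space, obtaining $\pr_{\ol z_j}v(\cdot,\eta)=0$ for a.e.\ $\eta$. Once $v(\cdot,\eta)$ is holomorphic and lies in $L^2(\Complex^n,\Phi_\eta)$, the reproducing property of $B_{\Phi_\eta}$ and Cauchy--Schwarz give
\[
\abs{v(z,\eta)}^2=\Bigl|\int_{\Complex^n}B_{\Phi_\eta}(z,w)v(w,\eta)e^{-2\Phi_\eta(w)}dv(w)\Bigr|^2\leq B_{\Phi_\eta}(z,z)\int_{\Complex^n}\abs{v(w,\eta)}^2e^{-2\Phi_\eta(w)}dv(w),
\]
where I used $\int_{\Complex^n}\abs{B_{\Phi_\eta}(z,w)}^2e^{-2\Phi_\eta(w)}dv(w)=B_{\Phi_\eta}(z,z)$ (and when $B_{\Phi_\eta}(z,z)=0$ this forces $v(z,\eta)=0$). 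Since $\hat u(\cdot,\eta)$ vanishes for a.e.\ $\eta\notin[0,1]$, on the relevant range $e^{\eta\sum_j\lambda_j\abs{z_j}^2}\geq1$, so $\abs{\hat u(z,\eta)}\leq\abs{v(z,\eta)}$; combined with $\int\abs{v(w,\eta)}^2e^{-2\Phi_\eta(w)}dv(w)=\int\abs{\hat u(w,\eta)}^2e^{-2\Phi_0(w)}dv(w)$ this is exactly the claimed pointwise bound (and it is sharp at $z=0$).

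For the last two assertions, set $g(\eta):=\int_{\Complex^n}\abs{\hat u(w,\eta)}^2e^{-2\Phi_0(w)}dv(w)$; by \eqref{s4-e12-1}, $g\in L^1(\Real)$ and $g$ is supported in $[0,1]$. By Lemma~\ref{abd}, $B_{\Phi_\eta}(z,z)$ is bounded uniformly for $z$ in a fixed compact set and $\eta\in[0,1]$, so the pointwise bound gives $\abs{\hat u(z,\eta)}\leq C_K\,g(\eta)^{1/2}$ on that set; Cauchy--Schwarz in $\eta$ then shows $\hat u(z,\cdot)\in L^1(\Real)$, and dominated convergence gives continuity of $z\mapsto\int_0^1\hat u(z,\eta)\,d\eta$. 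Finally, for fixed $z$ the function $\theta\mapsto u(z,\theta)\chi_j(\theta)$ lies in $C^\infty_0(\Real)$, so one-dimensional Fourier inversion at $\theta=0$ gives $u(z,0)=\frac{1}{2\pi}\int_{\Real}\hat u_j(z,\eta)\,d\eta$; letting $j\To\infty$ and passing the $\eta$-integral to the limit yields $u(0,0)=\frac{1}{2\pi}\int_0^1\hat u(0,\eta)\,d\eta$, i.e.\ \eqref{e-gue180615qI}.

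The main obstacle is precisely the two interchanges of limit with $j\To\infty$: one must show that $\hat u_j\to\hat u$ in a topology strong enough that (a) the twisted equation $\pr_{\ol z_j}v(\cdot,\eta)=0$, and hence the smoothness of $\hat u(\cdot,\eta)$, survives for almost every $\eta$, and (b) $\int_{\Real}\hat u_j(z,\eta)\,d\eta\to\int_{\Real}\hat u(z,\eta)\,d\eta$ for each fixed $z$. Mere convergence in $L^2(H_{n+1},\Phi_0)$ is not sufficient; upgrading it requires bringing in the CR structure together with subelliptic estimates for the Kohn Laplacian on $H_{n+1}$ in the weighted spaces (of the kind recorded in Proposition~\ref{p-gue180615}).
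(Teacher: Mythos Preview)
The paper does not give its own proof of this theorem; it is quoted from \cite{HM12} (Theorem~3.1 and Lemma~3.5) and \cite{HL15} (Proposition~2.9). Your outline is essentially the argument carried out in those references: take the partial Fourier transform in $\theta$, conjugate by the Gaussian $e^{\eta\sum_j\lambda_j|z_j|^2}$ to convert the twisted Cauchy--Riemann system into $\ddbar v(\cdot,\eta)=0$ in the weighted space $L^2(\Complex^n,\Phi_\eta)$, and then invoke the extremal/reproducing property of the Bergman kernel $B_{\Phi_\eta}$.

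Two remarks. First, the step $|\hat u(z,\eta)|\leq|v(z,\eta)|$ via $e^{\eta\sum_j\lambda_j|z_j|^2}\geq1$ tacitly uses $\lambda_j\geq0$, which is not part of the hypotheses in Section~\ref{s-gue180615rI}. Without it the Bergman extremal estimate gives only $|\hat u(z,\eta)|^2\leq e^{-2\eta\sum_j\lambda_j|z_j|^2}B_{\Phi_\eta}(z,z)\,g(\eta)$; this coincides with the stated bound at $z=0$, which is the only point where the inequality is invoked later (Theorem~\ref{t-gue180615y}), so the discrepancy is harmless.

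Second, the two limit interchanges you flag as the ``main obstacle'' can be handled without the subelliptic machinery of Proposition~\ref{p-gue180615}. For the holomorphy of $v(\cdot,\eta)$: a direct computation on the truncations $\hat u_j$ of \eqref{e-gue180615c} gives
\[
(\pr_{\ol z_\ell}+\eta\lambda_\ell z_\ell)\hat u_j(z,\eta)=-i\lambda_\ell z_\ell\,\widehat{u\,\pr_\theta\chi_j}(z,\eta),
\]
and the right side tends to $0$ in $L^2_{\rm loc}$ since $\norm{\pr_\theta\chi_j}_{L^\infty}=O(1/j)$; thus the twisted equation holds for $\hat u$ in $\mathscr D'(\Complex^n\times\Real)$, and a Fubini slicing plus Weyl's lemma yields $\ddbar v(\cdot,\eta)=0$ for a.e.\ $\eta$. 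For \eqref{e-gue180615qI}: once you know $\hat u(z,\cdot)$ is supported in $[0,1]$ and dominated by $C_Kg(\eta)^{1/2}\in L^1[0,1]$ on compacta, the function $w(z,\theta):=(2\pi)^{-1}\int_0^1\hat u(z,\eta)e^{i\theta\eta}\,d\eta$ is continuous in $(z,\theta)$ and has the same $L^2$ Fourier transform in $\theta$ as $u(z,\cdot)$ for a.e.\ $z$; since both sides are continuous, $u(0,0)=w(0,0)$.
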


Put
\[S_{H_{n+1},[0,1]}(x)=\sup\set{\abs{u(x)}^2;\, \norm{u}^2_{\Phi_0}=1, u\in H^0_{b,[0,1]}(H_{n+1},\Phi_0)]\bigcap C^\infty(H_{n+1})}. \]
We can now prove the following estimate for the Szeg\H{o} kernel.

\begin{theorem}\label{t-gue180615y}
We have
\[S_{H_{n+1},[0,1]}(0)\leq(2\pi)^{-n-1}\int_{\eta\in[0,1]\bigcap\hat{\mathbb R}_0}\abs{\det M_{\Phi_\eta}}d\eta.\]
\end{theorem}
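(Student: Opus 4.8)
The plan is to combine the reproducing identity and the pointwise estimate of Theorem~\ref{ii} with the explicit formula for the model Bergman kernel in Lemma~\ref{abd}. Fix $u\in H^0_{b,[0,1]}(H_{n+1},\Phi_0)\cap C^\infty(H_{n+1})$ with $\norm{u}^2_{\Phi_0}=1$, and for $\eta\in\Real$ set
\[
g(\eta):=\int_{\Complex^{n}}\abs{\hat u(w,\eta)}^2 e^{-2\Phi_0(w)}dv(w).
\]
By the Parseval identity \eqref{s4-e12-1} together with $\norm{u}^2_{\Phi_0}=1$ we have $\int_{\Real}g(\eta)\,d\eta=2\pi$, and since $\hat u(\cdot,\eta)=0$ for almost every $\eta\notin[0,1]$ this mass is concentrated on $[0,1]$.

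First I would apply the reproducing formula \eqref{e-gue180615qI} and the triangle inequality to get $\abs{u(0,0)}\leq\frac{1}{2\pi}\int_{\eta\in[0,1]}\abs{\hat u(0,\eta)}\,d\eta$. Next, the pointwise bound in Theorem~\ref{ii}, evaluated at $z=0$, gives $\abs{\hat u(0,\eta)}^2\leq B_{\Phi_\eta}(0,0)\,g(\eta)$ for almost every $\eta$; by Lemma~\ref{abd} the factor $B_{\Phi_\eta}(0,0)$ vanishes for $\eta\notin\hat{\mathbb R}_0$, so the last integral may be restricted to $[0,1]\cap\hat{\mathbb R}_0$. An application of the Cauchy--Schwarz inequality then yields
\[
\abs{u(0,0)}^2\leq\frac{1}{(2\pi)^2}\Bigl(\int_{[0,1]\cap\hat{\mathbb R}_0}B_{\Phi_\eta}(0,0)\,d\eta\Bigr)\Bigl(\int_{[0,1]\cap\hat{\mathbb R}_0}g(\eta)\,d\eta\Bigr).
\]

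To conclude, I would bound $\int_{[0,1]\cap\hat{\mathbb R}_0}g(\eta)\,d\eta\leq\int_{\Real}g(\eta)\,d\eta=2\pi$ and substitute $B_{\Phi_\eta}(0,0)=(2\pi)^{-n}\abs{\det M_{\Phi_\eta}}$ from \eqref{e-gue180615q}, valid for $\eta\in\hat{\mathbb R}_0$ since $\Phi_\eta(0)=0$. This produces $\abs{u(0,0)}^2\leq(2\pi)^{-n-1}\int_{[0,1]\cap\hat{\mathbb R}_0}\abs{\det M_{\Phi_\eta}}\,d\eta$, and taking the supremum over all admissible $u$ gives the assertion. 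I do not expect a genuine obstacle here; the only points needing a little care are the measurability of $\eta\mapsto g(\eta)$ and $\eta\mapsto B_{\Phi_\eta}(0,0)$, and the fact that the support condition and the pointwise estimate of Theorem~\ref{ii} hold only for almost every $\eta$, which is harmless once one integrates.
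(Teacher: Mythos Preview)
Your proposal is correct and follows essentially the same approach as the paper: both arguments invoke the reproducing identity \eqref{e-gue180615qI}, the pointwise bound $\abs{\hat u(0,\eta)}^2\leq B_{\Phi_\eta}(0,0)\,g(\eta)$ from Theorem~\ref{ii}, a Cauchy--Schwarz estimate in $\eta$, Parseval's relation \eqref{s4-e12-1} to bound $\int g(\eta)\,d\eta\leq 2\pi$, and finally Lemma~\ref{abd} for the explicit value of $B_{\Phi_\eta}(0,0)$. The only differences are cosmetic (you name $g(\eta)$ explicitly and restrict to $[0,1]\cap\hat{\mathbb R}_0$ before Cauchy--Schwarz rather than after).
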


\begin{proof}
By Theorem~\ref{ii} and notice that (see \eqref{s4-e12-1})
\[\int_{\eta\in[0,1]}|\hat u(w,\eta)|^2e^{-2\Phi_0(w)}dv(w)d\eta\leq \int|\hat u(w,\eta)|^2e^{-2\Phi_0(w)}dv(w)d\eta\leq 2\pi,\]
we have
\begin{equation}\label{e-gue150306ab}
\begin{split}
&\abs{u(0,0)}=\frac{1}{2\pi}\abs{\int_{\eta\in[0,1]}\hat u(0,\eta)d\eta}\\
&\leq\frac{1}{2\pi}\int_{\eta\in[0,1]}|\hat u(0,\eta)|\frac{(\int_{\mathbb C^{n}}|\hat u(w,\eta)|^2e^{-2\Phi_0(w)}dv(w))^{\frac12}}{(\int_{\mathbb C^{n}}|\hat u(w,\eta)|^2e^{-2\Phi_0(w)}dv(w))^{\frac12}}d\eta\\
\leq&\frac{1}{2\pi}\left(\int_{\eta\in[0,1]}\frac{|\hat u(0,\eta)|^2}{\int_{\mathbb C^{n}}|\hat u(w,\eta)|^2e^{-2\Phi_0(w)}dv(w)}d\eta\right)^{\frac12}\times\\
&\left(\int_{\eta\in[0,1]}|\hat u(w,\eta)|^2e^{-2\Phi_0(w)}dv(w)d\eta\right)^{\frac12}\\
&\leq\frac{1}{\sqrt{2\pi}}\left(\int_{\eta\in[0,1]}\frac{B_{\Phi_{\eta}}(0,0)\int_{\mathbb C^{n}}|\hat u(w,\eta)|^2e^{-2\Phi_0(w)}dv(w)}{\int_{\mathbb C^{n}}|\hat u(w,\eta)|^2e^{-2\Phi_0(w)}dv(w)}d\eta\right)^{\frac12}\\
&\leq\frac{1}{\sqrt{2\pi}}\left(\int_{\eta\in[0,1]}B_{\Phi_\eta}(0,0)d\eta\right)^{\frac12}\\
&\leq(2\pi)^{-\frac{n+1}{2}}\left(\int_{\eta\in[0,1]\bigcap\hat{\mathbb R}_0}\abs{\det M_{\Phi_\eta}}d\eta\right)^{\frac12}.
\end{split}
\end{equation}
From \eqref{e-gue150306ab}, the theorem follows.
\end{proof}

\subsection{Szeg\H{o} kernel asymptotics}\label{s-gue180615rII}.

We now return to our situation. For $x\in\hat X$, put
\begin{equation}\label{e-gue180618}
\Real_{x,0}=\{t\in\Real;\, \mbox{$R^L_x+2t\mathcal{L}_x$ has $n$ positive eigenvalues}\}.
\end{equation}
The goal of this section, is to prove the following

\begin{theorem}\label{t-gue180618}
For every $x\in\hat X$, we have
\begin{equation}\label{e-gue180618I}
\limsup_{k\To+\infty}k^{-(n+1)}\hat S_k(x)\leq(2\pi)^{-n-1}\int_{t\in[0,1]\bigcap\Real_{x,0}}\abs{\det\bigr(R^L_x+2t\mathcal{L}_x\bigr)}dt.
\end{equation}
\end{theorem}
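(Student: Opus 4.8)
The plan is to localize $\hat S_k$ near a fixed point, push it through the CR scaling of Section~\ref{s-gue180615rI}, and bound the rescaled sections by the Heisenberg model of Theorem~\ref{t-gue180615y}. Fix $p\in\hat X$; take $x=p$. For each large $k$, use Lemma~\ref{l-gue180615m} to choose $u_k\in H^0_{b,0\le\alpha\le k}(\hat X,L^k)$ with $\norm{u_k}_k=1$ and $\abs{u_k(p)}^2_{h^{L^k}}=\hat S_k(p)$. In the canonical coordinates $(z,\theta)$ and rigid CR trivializing section $s$ of \eqref{e-can} centered at $p$, write $u_k=u_k^{\rm loc}\,s^{\otimes k}$; since $\Phi(0)=0$ this gives $\hat S_k(p)=\abs{u_k^{\rm loc}(0)}^2$. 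Put $w_k:=k^{-(n+1)/2}F_k^\ast(u_k^{\rm loc})\in C^\infty(D_{\log k})$, so that $k^{-(n+1)}\hat S_k(p)=\abs{w_k(0,0)}^2$. As $s$ is a rigid CR frame, $\ddbar_bu_k^{\rm loc}=0$, so \eqref{l1} gives $\ddbar_{b,(k)}w_k=0$, hence $\Box_{b,(k)}w_k=0$; and a change of variables (Jacobian $k^{-(n+1)}$) turns $\norm{u_k}_k=1$ into $\norm{w_k}^2_{kF_k^\ast\Phi,D_{\log k}}\le1$. The theorem thus reduces to proving $\limsup_{k\To+\infty}\abs{w_k(0,0)}^2\le(2\pi)^{-n-1}\int_{[0,1]\cap\Real_{p,0}}\abs{\det(R^L_p+2t\mathcal L_p)}\,dt$.

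Next I would obtain uniform estimates and pass to a limit. Since $u_k$ is a finite sum of $(-iT)$-eigensections with eigenvalues in $[0,k]$, we have $\norm{(-iT)^{s+1}u_k}_k\le k^{s+1}$ for every $s$; writing $-iT=-i\pr_\theta$ in the trivialization and rescaling as above gives $\norm{\pr_\theta^{s+1}w_k}^2_{kF_k^\ast\Phi,D_{\log k}}\le1$ for every $s\in\N$. Feeding $\Box_{b,(k)}w_k=0$, $\norm{w_k}_{kF_k^\ast\Phi,D_{\log k}}\le1$ and these $\pr_\theta$-bounds into Proposition~\ref{p-gue180615} and iterating in $s$ shows that $\{w_k\}$ is bounded in $\norm{\cdot}_{kF_k^\ast\Phi,s,D_r}$ for all $s$ and all $r$. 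Since $kF_k^\ast\Phi\To\Phi_0$ and $F_k^\ast m\To1$ in $C^\infty_{\rm loc}(H_{n+1})$ (by \eqref{e-can} and the normalization of the coordinates at $p$), these weighted norms are uniformly comparable on a fixed $D_r$ to the ordinary $H^s(D_r)$-norms, so by Rellich and a diagonal argument some subsequence $w_{k_l}\To w$ in $C^\infty_{\rm loc}(H_{n+1})$; letting $l\To+\infty$ in $\norm{w_{k_l}}^2_{kF_{k_l}^\ast\Phi,D_R}\le1$ yields $w\in L^2(H_{n+1},\Phi_0)\cap C^\infty(H_{n+1})$ with $\norm{w}^2_{\Phi_0}\le1$.

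Now I would identify $w$ and conclude. Letting $l\To+\infty$ in $\ddbar_{b,(k_l)}w_{k_l}=0$ (the scaled operators converge to the model $\ddbar_b$ on $H_{n+1}$) gives $\ol U_{j,H_{n+1}}w=0$ for all $j$; and the spectral constraint ``eigenvalues of $-i\pr_\theta$ in $[0,1]$'' survives in the limit, via the partial Fourier transform analysis of~\cite{HM12} and~\cite{HL15}, to give $\hat w(z,\eta)=0$ for a.e.\ $\eta\notin[0,1]$. Hence $w\in H^0_{b,[0,1]}(H_{n+1},\Phi_0)\cap C^\infty(H_{n+1})$ and $\abs{w(0,0)}^2\le S_{H_{n+1},[0,1]}(0)\,\norm{w}^2_{\Phi_0}\le S_{H_{n+1},[0,1]}(0)$. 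A direct computation in the orthonormal frame $\set{\pr/\pr z_j}$ at $p$ identifies $M_{\Phi_\eta}$ with $R^L_p+2\eta\mathcal L_p$ (the $\lambda_j$ being the Levi eigenvalues and $(\mu_{j,t})$ the complex Hessian of the weight, cf.\ \eqref{e-can}, \eqref{e-gue150808w}, and the identification behind the leading term \eqref{e-gue150807a}), so $\hat{\mathbb R}_0=\Real_{p,0}$ and $\abs{\det M_{\Phi_\eta}}=\abs{\det(R^L_p+2\eta\mathcal L_p)}$; Theorem~\ref{t-gue180615y} then bounds $\abs{w(0,0)}^2$ by the right-hand side of \eqref{e-gue180618I}. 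Finally, choosing the initial sequence to realize $\limsup_k k^{-(n+1)}\hat S_k(p)$ before extracting the convergent subsequence upgrades the subsequential bound to the desired $\limsup$, which is \eqref{e-gue180618I}.

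The scaling bookkeeping and the subsequence chase are routine. I expect the delicate points to be (a) deriving the uniform $\norm{\pr_\theta^{s+1}w_k}$-bounds and feeding them correctly into Proposition~\ref{p-gue180615}, which is where the hypothesis $0\le\alpha\le k$ enters, and --- more seriously --- (b) showing that the $C^\infty_{\rm loc}$-limit $w$ inherits the spectral localization $\hat w=0$ off $[0,1]$. For (b) the difficulty is that $w_k$ is only defined on the expanding domains $D_{\log k}$ whereas the natural test functions (inverse Fourier transforms of cut-offs supported away from $[0,1]$) are not compactly supported, so the limit requires the tail and cut-off estimates for the partial Fourier transform of~\cite{HM12,HL15}; I expect this to be the main obstacle.
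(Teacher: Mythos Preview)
Your proposal is correct and follows essentially the same route as the paper: localize via Lemma~\ref{l-gue180615m}, rescale, use Proposition~\ref{p-gue180615} together with the $\pr_\theta$-bounds coming from $0\le\alpha\le k$ to extract a $C^\infty_{\rm loc}$-limit on $H_{n+1}$, verify the spectral localization $\hat w=0$ off $[0,1]$, and conclude with Theorem~\ref{t-gue180615y}. Your global argument for (a) (using orthogonality of the $(-iT)$-eigenspaces to get $\norm{\pr_\theta^{s+1}w_k}_{kF_k^\ast\Phi}\le1$) is in fact a bit cleaner than the paper's computation \eqref{e-ftaI}--\eqref{e-fta}; for (b), which you rightly flag as the crux, the paper carries out the cut-off/Fourier argument explicitly in \eqref{e-gue180622I}--\eqref{e-gue180622} using the eigen-decomposition $u_{k_j,\alpha}(z,\theta)=\hat u_{k_j,\alpha}(z)e^{i\alpha\theta}$ and the fact that $\alpha/k_j\in[0,1]$ forces \eqref{ff} to vanish.
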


Fix $p\in\hat X$. Let $(x_1,\cdots,x_{2n+1})=(z,\theta)=(z_1,\cdots,z_{n},\theta), z_j=x_{2j-1}+ix_{2j},j=1,\cdots,n, x_{2n+1}=\theta$, and local rigid CR trivialization section $s$, $\abs{s}^2_{h^L}=e^{-2\Phi}$, be as in \eqref{e-can} defined in some small neighborhood $D$. We will use the same notations as in Section~\ref{s-gue180615rI}.  From \eqref{e-gue180615p}, there exists a sequence $u_{k_j}\in H^0_{b, 0\leq\alpha\leq k_j}(\hat X, L^{k_j})$, $0<k_1<k_2<\cdots$, such that
$\|u_{k_j}\|^2_{k_j}=1$ and
\begin{equation}\label{bbb}
\lim_{j\rightarrow\infty}k_j^{-n-1}|u_{k_j}(p)|^2_{h^{L^{k_j}}}=\limsup_{k\rightarrow\infty}k^{-n-1}\hat S_{k}(p).
\end{equation}
Put
$u_{k_j}=\tilde{u}_{k_j}\otimes s^{k_j}$, $\tilde{u}_{k_j}\in C^\infty(D)$. We will always use  $u_{k_j}$ to denote $\tilde u_{k_j}$ if there is no misunderstanding.
Write
\begin{equation}\label{r}
u_{k_j}=\sum\limits_{\alpha\in{\rm Spec\,}(-iT), 0\leq\alpha\leq k_j}u_{k_j,\alpha},\ \ Tu_{k_j,\alpha}=i\alpha u_{k_j,\alpha}
\end{equation}
and set
\begin{equation}
u_{(k_j)}=k^{-\frac{n+1}{2}}_j\sum\limits_{\alpha\in{ Spec\,}(-iT), 0\leq\alpha\leq k_j}F_{k_j}^\ast(u_{k_j,\alpha}).
\end{equation}
We can check that
\begin{equation}\label{u}
\|u_{(k_j)}\|^2_{k_jF_{k_j}^\ast\Phi, D_{\log k_j}}\leq\|u_{k_j}\|^2_{k_j}\leq1
\end{equation}
 and
\begin{equation}\label{ddd}
\ddbar_{b,(k_j)}u_{(k_j)}=0
\end{equation}
hold for all \(j\).
By Proposition~\ref{p-gue180615} and combining \eqref{u}, \eqref{ddd}, we deduce that for any $r>0$ with $D_{2r}\Subset D_{\log k_j}$ and every $s\in\mathbb N$,  there is a constant $C_{r,s}>0$ such that for every $j$ we have
\begin{equation}\label{x}
\|u_{(k_j)}\|^2_{k_jF^\ast_{k_j}\Phi, s+1, D_r}\leq C_{r,s}\left(1+\left\|\left(\frac{\partial}{\partial\theta}\right)^{s+1}u_{(k_j)}\right\|_{k_jF_{k_j}^\ast\Phi, D_{2r}}^2\right).
\end{equation}
Since
\begin{equation}\label{e-gue180618mp}
\begin{split}
\frac{\partial}{\partial\theta} u_{(k_j)}&=k^{-\frac{n+1}{2}}_j\frac{\partial}{\partial\theta}
\sum\limits_{\alpha\in{\rm Spec\,}(-iT), 0\leq\alpha\leq k_j} F_{k_j}^\ast(u_{k_j,\alpha})\\
&=k^{-\frac{n+1}{2}}_j\sum\limits_{\alpha\in{\rm Spec\,}(-iT), 0\leq\alpha\leq k_j}\left(\frac{i\alpha}{k_j}\right) u_{k_j,\alpha}\left(\frac{z}{\sqrt k_j}, \frac{\theta}{k_j}\right),
\end{split}
\end{equation}
we have
\begin{equation}\label{e-gue180618mpI}
\left(\frac{\partial}{\partial\theta}\right)^{s+1} u_{(k_j)}=k^{-\frac{n+1}{2}}_j\sum\limits_{\alpha\in{\rm Spec\,}(-iT), 0\leq\alpha\leq k_j}\left(\frac{i\alpha}{k_j}\right)^{s+1}u_{k_j,\alpha}\left(\frac{z}{\sqrt k_j}, \frac{\theta}{k_j}\right).
\end{equation}
Thus,
\begin{equation}\label{e-ftaI}
\begin{split}
\left\|\left(\frac{\partial}{\partial\theta}\right)^{s+1} u_{(k_j)}\right\|^2_{k_jF_{k_j}^\ast\Phi, D_r}
\leq k_j\sum\limits_{\alpha\in{\rm Spec\,}(-iT), 0\leq\alpha\leq k_j}\left\|k_j^{-\frac{n+1}{2}} u_{k_j,\alpha}\left(\frac{z}{\sqrt k_j}, \frac{\theta}{k_j}\right)\right\|^2_{k_jF_{k_j}^\ast\Phi, D_{2r}}.
\end{split}
\end{equation}
Since $T=\frac{\pr}{\pr\theta}$ on $D$, for each $j$, there is a function $\hat{u}_{k_j,\alpha}(z)\in C^\infty(D)$ such that
\begin{equation}
u_{k_j,\alpha}(z,\theta)=\hat{u}_{k_j,\alpha}(z)e^{i\alpha\theta}~\text{on}~D.
\end{equation}
For $r>0$, put $\tilde D_r:=\set{z=(z_1,\ldots,z_n)\in\mathbb C^n;\, \abs{z_j}<r, j=1,\ldots,n}$.
We have
\begin{equation}\label{e-fta}
\begin{split}
&k_j\sum\limits_{\alpha\in{\rm Spec\,}(-iT), 0\leq\alpha\leq k_j}\left\|k^{-\frac{n+1}{2}}_ju_{k_j,\alpha}\left(\frac{z}{\sqrt k_j}, \frac{\theta}{k_j}\right)\right\|^2_{k_jF_{k_j}^\ast\Phi, D_{2r}}\\
&\leq\sum\limits_{\alpha\in{\rm Spec\,}(-iT), 0\leq\alpha\leq k_j}\int_{D_{2r}}k^{-n}_j\left|\hat{u}_{k_j,\alpha}\left(\frac{z}{\sqrt k_j}\right)\right|^2e^{-2k_j\Phi(\frac{z}{\sqrt k_j})}m(\frac{z}{\sqrt k_j})dv(z)d\theta\\
&\leq\sum\limits_{\alpha\in{\rm Spec\,}(-iT), 0\leq\alpha\leq k_j}(4r)\int_{\tilde D_{\frac{2r}{\sqrt k_j}}}|\hat{u}_{k_j,\alpha}(z)|^2e^{-2k_j\Phi(z)}m(z)dv(z)\\
&\leq\frac{4r}{\varepsilon}\sum\limits_{\alpha\in{\rm Spec\,}(-iT), 0\leq\alpha\leq k_j}\int_{\tilde D_{\frac{2r}{\sqrt k_j}}}\int_{ |\theta|<\varepsilon}|u_{k_j,\alpha}(z,\theta)|^2e^{-2k_j\Phi(z)}m(z,\theta)dv(z)d\theta\\
&\leq \frac{4r}{\varepsilon}\sum\limits_{\alpha\in{\rm Spec\,}(-iT), 0\leq\alpha\leq k_j}\| u_{k_j,\alpha}\|^2_{k_j}
\leq \frac{4r}{\varepsilon}\|u_{k_j}\|^2_{k_j}\leq\frac{4r}{\varepsilon},
\end{split}
\end{equation}
where $\varepsilon>0$ is a small constant,  $dv_{\hat X}=m(z)dv(z)d\theta$, $m(z)\in C^\infty(D)$, $dv(z)=2^ndx_1\cdots dx_{2n}$.  From (\ref{e-fta}) and (\ref{e-ftaI}), we deduce that
\begin{equation}\label{Eq:DerivTheta}
\left\|\left(\frac{\partial}{\partial\theta}\right)^{s+1}u_{(k_j)}\right\|^2_{kF_k^\ast\Phi, D_r}\leq \tilde C_{r,s},\ \ \forall j,
\end{equation}
where $\tilde C_{r, s}$ is a constant independent of $j$. Combining \eqref{Eq:DerivTheta} with (\ref{x}), there exists a constant $C_{r,s}^\prime>0$ independent of $j$ such that
\begin{equation}\label{y}
\|u_{(k_j)}\|^2_{k_jF_{k_j}^\ast\Phi, s+1, D_r}\leq C_{r,s}^\prime
\end{equation}
holds for all \(j\).
From \eqref{y}, we can use the same argument as in the proof of Theorem~2.9 in \cite{HM12} and conclude that there is a subsequence $\{u_{(k_{v_1})},u_{(k_{v_2})},\ldots\}$ of $\{u_{(k_j)}\}$, $0<k_{v_1}<k_{v_2}<\cdots$, such that $u_{(k_{v_\ell})}$ converges uniformly with all derivatives on any compact subset of $H_{n+1}$ to a smooth function $u\in C^\infty(H_{n+1})$ as $\ell\To\infty$,
\begin{equation}\label{e-gue180619}
\limsup_{k\To\infty}k^{-n-1}\hat S_{k}(p)=\abs{u(0)}^2
\end{equation}
and
\begin{equation}\label{e-gue180619I}
\|u\|_{\Phi_0}\leq 1,\ \  \ddbar_{b, H_{n+1}}u=0.
\end{equation}

\begin{proof}[Proof of Theroem~\ref{t-gue180618}]
We are now ready to prove Theorem~\ref{t-gue180618}. We will use the same notations as before. We first claim that

\begin{equation}\label{e-gue180619mp}
\mbox{$\hat u(z,\eta)=0$ for almost every $\eta\notin[0,1]$,}
\end{equation}
where $\hat u(z, \eta)$ is the partial Fourier transform of $u(z, \theta)$ with respect to $\theta$ (see the discussion after \eqref{e-gue180615c}). To prove the claim
\eqref{e-gue180619mp}, we only need to show that for any $\varphi(z,\eta)\in C_0^\infty(\mathbb C^{n}\times\{\eta\in\mathbb R;\, \eta\notin[0,1]\})$, we have
\begin{equation}\label{e-gue180619mpI}
\int_{H_{n+1}}\hat u(z,\eta)\varphi(z,\eta)e^{-2\Phi_0(z)}dv(z)d\eta=0.
\end{equation}
We assume ${\rm Supp\,}\varphi\Subset \mathbb{D}^n(r_0)\times\{\eta\in\mathbb R;\, \eta\notin[0,1]\}$. Here, \(\mathbb{D}^n(r_0)=\{z\in\mathbb C^{n};\, |z_j|\leq r_0,\, j=1,\cdots,n\}\) denotes the polydisc around the origin of common radius \(r_0\). 
 Choose $\chi\in C_0^\infty(\mathbb R)$ such that $\chi\equiv1$ when $|\theta|\leq 1$ and ${\rm Supp\,}\chi\Subset\{\theta\in\mathbb R;\, |\theta|<2\}$. From \eqref{s4-e12-2}, we have
\begin{equation}\label{e-gue180622I}
\begin{split}
\int_{H_{n+1}}\hat u(z,\eta)&\varphi(z,\eta)e^{-2\Phi_0(z)}dv(z)d\eta
=\int_{H_{n+1}}u(z,\theta)\hat{\varphi}(z,\theta)e^{-2\Phi_0(z)}dv(z)d\theta\\
&=\lim_{r\rightarrow\infty}\int_{H_{n+1}}u(z,\theta)\hat{\varphi}(z,\theta)e^{-2\Phi_0(z)}
\chi\left(\frac{\theta}{r}\right)dv(z)d\theta,
\end{split}
\end{equation}
where $\hat{\varphi}(z,\theta):=\int_{\mathbb R} e^{-i\theta\eta}\varphi(z,\eta)d\eta$ is the partial Fourier transform of $\varphi(z,\eta)$ respect to $\eta$. For simplicity, we may assume that $u_{(k_{j})}$ converges uniformly with all derivatives on any compact subset of $H_{n+1}$ to $u$ as $j\To\infty$.
As before, on $D$, for each $j$, we can write
\[\begin{split}
&u_{k_j}=\sum\limits_{\alpha\in{\rm Spec\,}(-iT), 0\leq\alpha\leq k_j} u_{k_j,\alpha},\  \ Tu_{k_j,\alpha}=i\alpha u_{k_j,\alpha},\\
&u_{k_j,\alpha}(z,\theta)=\hat{u}_{k_j,\alpha}(z)e^{i\alpha\theta}~\text{on}~D,\ \ \hat{u}_{k_j,\alpha}(z)\in C^\infty(D),\ \ \forall j,\ \ \forall\alpha\in{\rm Spec\,}(-iT).
\end{split}\]
When $r$ is fixed, by dominated convergence theorem we find
\begin{equation}\label{e-gue150303b}
\begin{split}
~~~~&\int_{H_{n+1}}u(z,\theta)\hat{\varphi}(z,\theta)e^{-2\Phi_0(z)}
\chi\left(\frac{\theta}{r}\right)dv(z)d\theta\\
&=\lim_{j\rightarrow\infty}\sum_{\alpha\in{\rm Spec\,}(-iT), 0\leq\alpha\leq k_j}\int_{H_{n+1}}k_j^{-\frac{n+1}{2}}
\hat u_{k_j,\alpha}\left(\frac{z}{\sqrt {k_j}}\right)e^{i\frac{\alpha}{ k_j}\theta}\hat\varphi(z,\theta)\chi\left(\frac{\theta}{r}\right)e^{-2\Phi_0(z)}dv(z)d\theta\\
&=\lim_{j\rightarrow\infty}\sum_{\alpha\in{\rm Spec\,}(-iT), 0\leq\alpha\leq k_j}\int_{\mathbb{D}^n(r_0)}\int_{\mathbb R}
k_j^{-\frac{n+1}{2}}\hat u_{k_j,\alpha}\left(\frac{z}{\sqrt {k_j}}\right)e^{i\frac{\alpha}{k_j}\theta}
\hat\varphi(z,\theta)\chi\left(\frac{\theta}{r}\right)e^{-2\Phi_0(z)}dv(z)d\theta.
\end{split}
\end{equation}
Since ${\rm Supp\,}\varphi(z,\eta)\Subset\mathbb{D}^n(r_0)\times\{\theta\in\mathbb R: \eta\notin[0,1]\}$ and $0\leq\frac{\alpha}{k_j}\leq1$, we have
\begin{equation}\label{ff}
\begin{split}
&\sum_{\alpha\in{\rm Spec\,}(-iT), 0\leq\alpha\leq k_j}\int_{\mathbb{D}^n(r_0)}\int_{\Real}k_j^{-\frac{n+1}{2}}\hat u_{k_j,\alpha}\left(\frac{z}{\sqrt{k_j}}\right)e^{i\frac{\alpha}{k_j}\theta}\hat\varphi(z,\theta)e^{-2\Phi_0(z)}dv(z)d\theta\\
&=(2\pi)\sum_{\alpha\in{\rm Spec\,}(-iT), 0\leq\alpha\leq k_j}\int_{\mathbb{D}^n(r_0)}\int_{\Real}k_j^{-\frac{n+1}{2}}\hat u_{k_j,\alpha}\left(\frac{z}{\sqrt{k_j}}\right)\varphi(z,\frac{\alpha}{k_j})e^{-2\Phi_0(z)}dv(z)=0.\,\,\,\,\,\,
\end{split}
\end{equation}
By \eqref{ff}, H\"older inequality and some straightforward calculation, we have
\begin{equation}\label{e-gue150303bI}
\begin{split}
&\abs{\sum_{\alpha\in{\rm Spec\,}(-iT), 0\leq\alpha\leq k_j}\int_{\mathbb{D}^n(r_0)}\int_{\mathbb R}
k_j^{-\frac{n+1}{2}}\hat u_{k_j,\alpha}\left(\frac{z}{\sqrt {k_j}}\right)e^{i\frac{\alpha}{k_j}\theta}
\hat\varphi(z,\theta)\chi\left(\frac{\theta}{r}\right)e^{-2\Phi_0(z)}dv(z)d\theta}\\
&=\abs{\sum_{\alpha\in{\rm Spec\,}(-iT), 0\leq\alpha\leq k_j}\int_{\mathbb{D}^n(r_0)}\int_{\mathbb R}k_j^{-\frac{n+1}{2}}\hat u_{k_j,\alpha}\left(\frac{z}{\sqrt{k_j}}\right)e^{i\frac{\alpha}{k_j}\theta}
\hat\varphi(z,\theta)\left(\chi\left(\frac{\theta}{r}\right)-1\right)e^{-2\Phi_0(z)}dv(z)d\theta}\\
&\leq\sum_{\alpha\in{\rm Spec\,}(-iT), 0\leq\alpha\leq k_j}\left(\int_{\mathbb{D}^n(r_0)}\int_{|\theta|\geq
r}k_j^{-n-1}|\hat u_{k_j, \alpha}\left(\frac{z}{\sqrt{k_j}}\right)|^2\cdot|\hat\varphi(z,\theta)|e^{-2\Phi_0(z)}dv(z)d\theta
\right)^{\frac12}\times\\
&\left(\int_{\mathbb{D}^n(r_0)}\int_{|\theta|\geq r}|\hat\varphi(z,\theta)|e^{-2\Phi_0(z)}dv(z)d\theta\right)^{\frac12}\\
&\leq C_0\sum_{\alpha\in{\rm Spec\,}(-iT), 0\leq\alpha\leq k_j}\frac{1}{\sqrt{k_j}}\norm{u_{k_j,\alpha}}\left(\int_{\mathbb{D}^n(r_0)}\int_{|\theta|\geq r}|\hat\varphi(z,\theta)|e^{-2\Phi_0(z)}dv(z)d\theta\right)^{\frac12}\\
&\leq C_1\norm{u_{k_j}}^2_{k_j}
\int_{\mathbb{D}^n(r_0)}\int_{|\theta|\geq r}|\hat\varphi(z,\theta)|e^{-2\Phi_0(z)}dv(z)d\theta,
\end{split}
\end{equation}
where $C_0>0$, $C_1>0$ are constants independent of $j$. From \eqref{e-gue150303b} and \eqref{e-gue150303bI}, we deduce
\begin{equation}\label{e-gue180622}
\begin{split}
\abs{\int_{H_{n+1}}u(z,\theta)\hat{\varphi}(z,\theta)e^{-2\Phi_0(z)}
\chi(\frac{\theta}{r})dv(z)d\theta}\leq C_2\int_{\mathbb{D}^n(r_0)}\int_{|\theta|\geq r}|\hat\varphi(z,\theta)|e^{-2\Phi_0(z)}dv(z)d\theta,
\end{split}
\end{equation}
where $C_2>0$ is a constant independent of $k_j$. From \eqref{e-gue180622}, \eqref{e-gue180622I} and notice that
\[\lim_{r\To+\infty}\int_{\mathbb{D}^n(r_0)}\int_{|\theta|\geq r}|\hat\varphi(z,\theta)|e^{-2\Phi_0(z)}dv(z)d\theta=0,\]
we get \eqref{e-gue180619mpI} and the claim \eqref{e-gue180619mp} follows.

From \eqref{e-gue180619}, \eqref{e-gue180619I}, \eqref{e-gue180619mp} and Theorem~\ref{t-gue180615y}, we find
\begin{equation}\label{e-gue180622a}
\limsup_{k\To+\infty}k^{-n-1}\hat S_k(p)\leq S_{H_{n+1},[0,1]}(0)\leq(2\pi)^{-n-1}\int_{\eta\in[0,1]\bigcap\hat{\mathbb R}_0}\abs{\det M_{\Phi_\eta}}d\eta.
\end{equation}
It is not difficult to check that
\[\int_{\eta\in[0,1]\bigcap\hat{\mathbb R}_0}\abs{\det M_{\Phi_\eta}}d\eta
=\int_{t\in[0,1]\bigcap\Real_{p,0}}\abs{\det\bigr(R^L_p+2t\mathcal{L}_p\bigr)}dt.\]
From this observation and \eqref{e-gue180622a}, the theorem follows.
\end{proof}

\section{Proofs of Theorem~\ref{t-gue180528} and Theorem~\ref{t-gue180528I}}\label{s-gue180606m}

In this section, we will prove Theorem~\ref{t-gue180528} and Theorem~\ref{t-gue180528I}. We will use the same notations and assumptions as in Section~\ref{s-gue170805}.
Let $M$ be a compact complex manifold of complex dimension $n$ and let $(L,h^L)$ be a holomorphic line bundle over $M$, where $h^L$ denotes the given Hermitian fiber metric on $L$. Let $R^L$ be the curvature on $M$ induced by $h^L$. We assume that $R^L$ is positive at every point of $M$. Consider a new CR manifold $\hat X:=M\times X$ with natural CR structure $T^{1,0}\hat X:=T^{1,0}M\oplus T^{1,0}X$, where $T^{1,0}M$ denotes the complex structure on $M$. Then, $(\hat X, T^{1,0}\hat X)$ is a compact CR manifold of dimension $2(2n)+1$.  Let $T=\mu_1T_1+\cdots+\mu_dT_d$ and $\omega_0$ be as in the discussion after \eqref{e-gue180528I}. From now on, we consider $T$ as a global non-vanishing vector field on $\hat X$ and $\omega_0$ as a global non-vanishing one form on $\hat X$. It is clear that
\[\Complex T\hat X:=T^{1,0}\hat X\oplus T^{0,1}\hat X\oplus\Complex T\ \ \mbox{on $\hat X$}.\]
With the one form $\omega_0$, we define the Levi form $\mathcal{L}_{\hat x}$ at $\hat x\in\hat X$ as Definition~\ref{d-gue150808}. The torus action $T^d$ acting on $X$ lifts to $\hat X$ in the natural way:
\[g\circ (z,x):=(z,g\circ x),\ \ \forall g\in T^d,\]
where $(z,x)\in M\times X$. Moreover, the global non-vanishing vector field on $T$ induces a transversal CR $\mathbb{R}$-action $\eta$ on $\hat X$.

Fix a Hermitian metric $\langle\,\cdot\,,\,\cdot\,\rangle$ on $\Complex TM$. The torus invariant Hermitian metric $\langle\,\cdot\,|\,\cdot\,\rangle$ on $\Complex TX$ and the Hermitian metric $\langle\,\cdot\,,\,\cdot\,\rangle$ on $\Complex TM$ induce a $\mathbb{R}$-invariant Hermitian metric $\langle\,\cdot\,|\,\cdot\,\rangle$ on $\Complex T\hat X$ and Assumption~\ref{a-gue180605} holds with such Hermitian metric. From now on, we fix the $\mathbb{R}$-invariant Hermitian metric $\langle\,\cdot\,|\,\cdot\,\rangle$ on $\Complex T\hat X$ induced by the Hermitian metric $\langle\,\cdot\,,\,\cdot\,\rangle$ on $\Complex TM$ and the torus invariant Hermitian metric $\langle\,\cdot\,|\,\cdot\,\rangle$ on $\Complex TX$. We consider $(L,h^L)$ as a CR line bundle over $\hat X$ such that $L$ is trivial on $X$. It is obvious that $L$ is a rigid CR line bundle and $h^L$ is a rigid Hermitian metric on $L$. Since $L$ is positive and $X$ is strongly pseudoconvex, we conclude that
\begin{equation}\label{e-gue180605p}
\mbox{$R^L_{\hat x}+2s\mathcal{L}_{\hat x}$ is positive definite at every $\hat x\in\hat X$, for every $s\in]0,+\infty[$}.
\end{equation}

The Hermitian metric on $L^k$ induced by $h^L$ is denoted by $h^{L^k}$. Let $(\,\cdot\,|\,\cdot\,)_k$ and $(\,\cdot\,,\,\cdot\,)_k$ be the $L^2$ inner products on
$C^\infty(\hat X,L^k)$ and $C^\infty(M,L^k)$ induced by $h^{L^k}$, $\langle\,\cdot\,|\,\cdot\,\rangle$ and $h^{L^k}$, $\langle\,\cdot\,,\,\cdot\,\rangle$ respectively and let $(\,\cdot\,|\,\cdot\,)$ be the $L^2$ inner product on $C^\infty(X)$ induced by $\langle\,\cdot\,|\,\cdot\,\rangle$.

Since the transversal CR $\mathbb{R}$-action $\eta$ on $\hat X$ comes from the torus action on $\hat X$, we can repeat the proof of Theorem 4.5 in~\cite{HHL17} and conclude that Assumption~\ref{a-gue180604} holds and
\begin{equation}\label{e-gue180608e}
{\rm Spec\,}(-iT)\subset\set{\mu_1p_1+\cdots+\mu_dp_d;\, (p_1,\ldots,p_d)\in\mathbb Z^d}.
\end{equation}
Take $\tau\in C^\infty_0(]0,+\infty[)$ and let $\hat S_{k,\tau}(\hat x)\in C^\infty(\hat X)$ be as in \eqref{e-gue180605}. From Theorem~\ref{t-gue180605}, we deduce that
\begin{equation}\label{e-gue180605mp}
\begin{split}
&\hat S_{k,\tau}(\hat x)\sim\sum^\infty_{j=0}a_j(\hat x)k^{2n+1-j}\text{ in }S^{2n+1}_{{\rm loc\,}}
(1;\hat X),\\
&a_j(\hat x)\in C^\infty(\hat X),\ \ j=0,1,2,\ldots,\\
&a_0(\hat x)=(2\pi)^{-2n-1}\int\abs{\det\bigr(R^L_{\hat x}+2t\mathcal{L}_{\hat x}\bigr)}\abs{\tau(t)}^2dt,\ \ \forall\hat x\in\hat X.
\end{split}
\end{equation}
For $\hat x=(z,x)\in\hat X=M\times X$, it is easy to see that
\begin{equation}\label{e-gue180605mpI}
\det\bigr(R^L_{\hat x}+2t\mathcal{L}_{\hat x}\bigr)=(2t)^n\Bigr(\det R^L_z\Bigr)\Bigr(\det\mathcal{L}_x\Bigr),
\end{equation}
where $\det R^L_z=v_1(z)\cdots v_n(z)$, $v_j(z)$, $j=1,\ldots,n$, are the eigenvalues of $R^L_z$ with respect to $\langle\,\cdot\,,\,\cdot\,\rangle$ and $\det\mathcal{L}_x$ is as in the discussion after \eqref{e-gue180528aI}. From \eqref{e-gue180605mp} and \eqref{e-gue180605mpI}, we have
\begin{equation}\label{e-gue180605mpII}
a_0(\hat x)=(2\pi)^{-2n-1}\abs{\det R^L_{z}}\abs{\det\mathcal{L}_x}\int (2t)^n\abs{\tau(t)}^2dt,\ \ \forall\hat x=(z,x)\in\hat X=M\times X.
\end{equation}

For $k\in\mathbb N$, let $H^0(M,L^k):=\set{u\in C^\infty(M,L^k);\, \ddbar u=0}$. Let $\set{g_1,\ldots,g_{d_k}}$ be an orthonormal basis for $H^0(M,L^k)$ with respect to
$(\,\cdot\,,\,\cdot\,)_k$ and put
\[B_k(z):=\sum^{d_k}_{j=1}\abs{g_j(z)}^2_{h^{L^k}}.\]
Fix $(p_1,\ldots,p_d)\in\mathbb Z^d$. Let $H^0_{b,p_1,\dots,p_d}(X)$ and $S_{p_1,\ldots,p_d}(x)\in C^\infty(X)$ be as in \eqref{e-gue180605a} and \eqref{e-gue180528II} respectively. We need

\begin{lemma}\label{l-gue180605a}
Fix $(p_1,\ldots,p_d)\in\mathbb Z^d$ and let $\alpha=p_1\mu_1+\cdots+p_d\mu_d\in{\rm Spec\,}(-iT)$. We have
\[H^0_{b,\alpha}(\hat X, L^k)={\rm span\,}\set{g(z)h(x);\, g(z)\in H^0(M,L^k), h(x)\in H^0_{b,p_1,\ldots,p_d}(X)}.\]
\end{lemma}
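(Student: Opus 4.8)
The plan is to prove Lemma~\ref{l-gue180605a} by exploiting the product structure $\hat X=M\times X$ together with the fact that $L$, being pulled back from $M$, is trivial along the $X$-factor. The first step is to decompose the tangential Cauchy--Riemann operator. Since $T^{1,0}\hat X=T^{1,0}M\oplus T^{1,0}X$ and the fixed Hermitian metric on $\Complex T\hat X$ is the orthogonal product of those on $\Complex TM$ and $\Complex TX$, the bundle $T^{*0,1}\hat X$ splits orthogonally as $T^{*0,1}M\oplus T^{*0,1}X$, and, writing sections of $L^k$ over $\hat X$ in local rigid CR frames of $L^k$ coming from holomorphic frames on $M$ (hence constant along $X$), one obtains a decomposition $\ddbar_b=\ddbar_M\oplus\ddbar_{b,X}$ on $C^\infty(\hat X,L^k)$, where $\ddbar_M$ is the Dolbeault operator of the holomorphic bundle $L^k$ over $M$ and $\ddbar_{b,X}$ is the tangential Cauchy--Riemann operator of $X$ acting on the $X$-directional part. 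Hence, for $u\in C^\infty(\hat X,L^k)$, we have $\ddbar_bu=0$ if and only if $\ddbar_Mu=0$ and $\ddbar_{b,X}u=0$. Moreover, since the torus acts trivially on $M$ and on $L$ and the $\mu_j$ are linearly independent over $\mathbb Q$ (so that $\sum_jq_j\mu_j=\sum_jp_j\mu_j$ forces $q=p$ for $q,p\in\mathbb Z^d$), decomposing $u$ into its torus Fourier modes shows that $C^\infty_\alpha(\hat X,L^k)$ is precisely the space of $u\in C^\infty(\hat X,L^k)$ with $u(z,(e^{i\theta_1},\ldots,e^{i\theta_d})\circ x)=e^{i(p_1\theta_1+\cdots+p_d\theta_d)}u(z,x)$.

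Granting this, the inclusion ``$\supseteq$'' is immediate: if $g\in H^0(M,L^k)$ and $h\in H^0_{b,p_1,\ldots,p_d}(X)$, then $(z,x)\mapsto g(z)h(x)$ is a smooth section of $L^k$ over $\hat X$ with $\ddbar_M(gh)=h\,\ddbar_Mg=0$ and $\ddbar_{b,X}(gh)=g\,\ddbar_{b,X}h=0$, and it transforms under the torus with character $(p_1,\ldots,p_d)$ because $h$ does; so $g\cdot h\in H^0_{b,\alpha}(\hat X,L^k)$.

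For the reverse inclusion, fix $u\in H^0_{b,\alpha}(\hat X,L^k)$. For each $x\in X$, the decomposition of $\ddbar_b$ gives $\ddbar_M\bigl(u(\cdot,x)\bigr)=0$, so $u(\cdot,x)\in H^0(M,L^k)$ and we may expand it in the fixed orthonormal basis $\set{g_1,\ldots,g_{d_k}}$: $u(z,x)=\sum_{j=1}^{d_k}c_j(x)g_j(z)$ with $c_j(x)=\bigl(u(\cdot,x),g_j\bigr)_k=\int_M\langle\,u(z,x)\,,\,g_j(z)\,\rangle_{h^{L^k}}\,dv_M(z)$. It remains to check that $c_j\in H^0_{b,p_1,\ldots,p_d}(X)$ for each $j$. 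Smoothness of $c_j$ follows by differentiating the last integral under the integral sign, the integrand being smooth on the compact manifold $M\times X$. The character of $c_j$ is obtained by comparing, for $g_0=(e^{i\theta_1},\ldots,e^{i\theta_d})$, the two expansions $u(z,g_0\circ x)=\sum_jc_j(g_0\circ x)g_j(z)$ and $u(z,g_0\circ x)=e^{i(p_1\theta_1+\cdots+p_d\theta_d)}\sum_jc_j(x)g_j(z)$; since $\set{g_1,\ldots,g_{d_k}}$ is a basis of $H^0(M,L^k)$, this forces $c_j(g_0\circ x)=e^{i(p_1\theta_1+\cdots+p_d\theta_d)}c_j(x)$. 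Finally, because $g_j$ is constant along $X$ we have $\ddbar_{b,X}\langle\,u(z,x)\,,\,g_j(z)\,\rangle_{h^{L^k}}=\langle\,\ddbar_{b,X}u(z,x)\,,\,g_j(z)\,\rangle_{h^{L^k}}=0$, since $\ddbar_{b,X}u=0$; differentiating under the integral sign once more gives $\ddbar_{b,X}c_j=0$. Thus $c_j\in H^0_{b,p_1,\ldots,p_d}(X)$, and $u$ lies in the asserted span.

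The step requiring the most care is the first one: establishing $\ddbar_b=\ddbar_M\oplus\ddbar_{b,X}$ and, relatedly, the identities $\ddbar_{b,X}(g\cdot h)=g\,\ddbar_{b,X}h$ and $\ddbar_{b,X}\langle u,g_j\rangle_{h^{L^k}}=\langle\ddbar_{b,X}u,g_j\rangle_{h^{L^k}}$ used above. These all reduce to the bookkeeping that the $X$-directional derivatives see only the coefficient function once $L^k$ is trivialized by frames pulled back from $M$; the rest of the argument is routine.
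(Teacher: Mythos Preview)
Your proof is correct and follows essentially the same approach as the paper's own proof: expand $u(z,x)$ in an orthonormal basis $\{g_j\}$ of $H^0(M,L^k)$ and verify that the coefficients $c_j(x)=(u(\cdot,x),g_j)_k$ lie in $H^0_{b,p_1,\ldots,p_d}(X)$. You supply more detail than the paper does---in particular the explicit decomposition $\ddbar_b=\ddbar_M\oplus\ddbar_{b,X}$ and the use of the $\mathbb Q$-linear independence of the $\mu_j$ to identify the $-iT$-eigenvalue condition with the torus-character condition---but the argument is the same.
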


\begin{proof}
It is clear that
\[{\rm span\,}\set{g(z)h(x);\, g(z)\in H^0(M,L^k), h(x)\in H^0_{b,p_1,\ldots,p_d}(X)}\subset H^0_{b,\alpha}(\hat X, L^k).\]
Let $u=u(z,x)\in H^0_{b,\alpha}(\hat X,L^k)$. For every $x\in X$, it is clear that $v_x(z): z\To u(z,x)$ is an element in $H^0(M,L^k)$. Hence,
\begin{equation}\label{e-gue180605am}
u(z,x)=v_x(z)=\sum^{d_k}_{j=1}g_j(z)(\,u(z,x)\,,\,g_j(z)\,)_k,
\end{equation}
where $\set{g_1,\ldots,g_{d_k}}$ is an orthonormal basis for $H^0(M,L^k)$. Now, for each $j$, we can check that $x\To(\,u(z,x)\,,\,g_j(z)\,)_k\in H^0_{b,p_1,\ldots,p_d}(X)$. From this observation and \eqref{e-gue180605am}, the lemma follows.
\end{proof}

Let $S_{k,\tau}(x)\in C^\infty(X)$ be as in \eqref{e-gue180528a}. We have

\begin{theorem}\label{t-gue180605e}
With the notations and assumptions above, we have
\begin{equation}\label{e-gue180608eI}
\hat S_{k,\tau}(\hat x)=B_k(z)S_{k,\tau}(x),\ \ \forall \hat x=(z,x)\in\hat X=M\times X.
\end{equation}
\end{theorem}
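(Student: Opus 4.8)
The plan is to reduce the statement to the product structure of $\hat X=M\times X$ together with Lemma~\ref{l-gue180605a}, and then to sum over the weights $\tau(\alpha/k)$.

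First I would record the consequences of the way the geometry on $\hat X$ was set up: $L$ over $\hat X$ is the pullback of $L$ over $M$, $h^L$ over $\hat X$ is the pullback of $h^L$ over $M$, and $\langle\,\cdot\,|\,\cdot\,\rangle$ on $\Complex T\hat X$ is the product of the chosen metrics on $\Complex TM$ and $\Complex TX$; hence $dv_{\hat X}=dv_M\,dv_X$ and, for $g\in C^\infty(M,L^k)$ and $h\in C^\infty(X)$, $|g(z)h(x)|^2_{h^{L^k}}=|g(z)|^2_{h^{L^k}}\,|h(x)|^2$, so that $(\,g\otimes h\,|\,g'\otimes h'\,)_k=(\,g\,,\,g'\,)_k(\,h\,|\,h'\,)$. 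Consequently, if $\set{g_1,\ldots,g_{d_k}}$ is an orthonormal basis of $H^0(M,L^k)$ and $\set{h_1,\ldots,h_e}$ an orthonormal basis of $H^0_{b,p_1,\ldots,p_d}(X)$, then $\set{g_jh_\ell}_{j,\ell}$ is an orthonormal system which, by Lemma~\ref{l-gue180605a}, spans $H^0_{b,\alpha}(\hat X,L^k)$ with $\alpha=\mu_1p_1+\cdots+\mu_dp_d$; being orthonormal and spanning a finite-dimensional space (see \eqref{e-gue150806III}), it is an orthonormal basis. Substituting it into the definition \eqref{e-gue180604m} gives at once
\[
\hat S_{k,\alpha}(\hat x)=\sum_{j,\ell}|g_j(z)|^2_{h^{L^k}}|h_\ell(x)|^2=B_k(z)\,S_{p_1,\ldots,p_d}(x),\qquad \hat x=(z,x).
\]

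Next I would reconcile the two index sets appearing in \eqref{e-gue180605} and \eqref{e-gue180528a}. Since $\mu_1,\ldots,\mu_d$ are linearly independent over $\mathbb Q$, the assignment $(p_1,\ldots,p_d)\mapsto\mu_1p_1+\cdots+\mu_dp_d$ is injective on $\mathbb Z^d$, so by \eqref{e-gue180608e} each $\alpha\in{\rm Spec\,}(-iT)$ comes from a unique tuple. If $(p_1,\ldots,p_d)$ is a tuple with $\alpha:=\mu_1p_1+\cdots+\mu_dp_d\notin{\rm Spec\,}(-iT)$, then one of $H^0(M,L^k)$, $H^0_{b,p_1,\ldots,p_d}(X)$ must be $\set{0}$: otherwise a nonzero product $g\,h$ would lie in $H^0_{b,\alpha}(\hat X,L^k)\subset C^\infty_\alpha(\hat X,L^k)$ by the trivial inclusion in Lemma~\ref{l-gue180605a}, making $\alpha$ an eigenvalue of the self-adjoint operator $-iT$, hence an element of its spectrum, a contradiction. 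In that case $B_k(z)S_{p_1,\ldots,p_d}(x)\equiv 0$ (either $B_k\equiv 0$ or $S_{p_1,\ldots,p_d}\equiv 0$), and the same dichotomy covers the case $\alpha\in{\rm Spec\,}(-iT)$ with $H^0_{b,\alpha}(\hat X,L^k)=\set{0}$, where also $\hat S_{k,\alpha}\equiv 0$. Thus the displayed identity holds for every $(p_1,\ldots,p_d)\in\mathbb Z^d$ under the convention $\hat S_{k,\alpha}:=0$ for $\alpha\notin{\rm Spec\,}(-iT)$.

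Finally, since $\tau\in C^\infty_0(]0,+\infty[)$ makes both \eqref{e-gue180605} and \eqref{e-gue180528a} finite sums, no convergence issue arises and I would simply regroup:
\[
\hat S_{k,\tau}(\hat x)=\sum_{\alpha\in{\rm Spec\,}(-iT)}\tau\!\left(\tfrac{\alpha}{k}\right)\hat S_{k,\alpha}(\hat x)=\sum_{(p_1,\ldots,p_d)\in\mathbb Z^d}\tau\!\left(\tfrac{\mu_1p_1+\cdots+\mu_dp_d}{k}\right)B_k(z)\,S_{p_1,\ldots,p_d}(x)=B_k(z)\,S_{k,\tau}(x),
\]
which is \eqref{e-gue180608eI}. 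The only genuine input is Lemma~\ref{l-gue180605a}; everything else is bookkeeping. The step needing the most care is the reconciliation of the sum over ${\rm Spec\,}(-iT)$ with the sum over $\mathbb Z^d$ and the disposal of the degenerate tuples, handled as above via the easy half of Lemma~\ref{l-gue180605a} and the self-adjointness of $-iT$.
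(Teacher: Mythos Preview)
Your proof is correct and follows essentially the same route as the paper: use Lemma~\ref{l-gue180605a} to produce the orthonormal basis $\{g_jh_\ell\}$ of $H^0_{b,\alpha}(\hat X,L^k)$, deduce $\hat S_{k,\alpha}(\hat x)=B_k(z)S_{p_1,\ldots,p_d}(x)$, and sum. You are in fact more careful than the paper, which simply asserts the orthonormality and does not spell out the reconciliation between the sum over ${\rm Spec\,}(-iT)$ and the sum over $\mathbb Z^d$; your handling of that point (via the $\mathbb Q$-independence of the $\mu_j$ and the easy inclusion in Lemma~\ref{l-gue180605a}) fills a gap the paper leaves implicit.
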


\begin{proof}
Fix $(p_1,\ldots,p_d)\in\mathbb Z^d$. Let $\set{f_1,\ldots,f_d}$ be an orthonormal basis for $H^0_{b,p_1,\ldots,p_d}(X)$. From Lemma~\ref{l-gue180605a}, we have
\[\set{g_jf_\ell |\, j=1,\ldots,d_k, \ell=1,\ldots,d}\]
is an orthonormal basis for $H^0_{b,\alpha}(\hat X)$, where $\alpha=p_1\mu_1+\cdots+p_d\mu_d$ and
$\set{g_1,\ldots,g_{d_k}}$ is an orthonormal basis for $H^0(M,L^k)$. Hence,
\begin{equation}\label{e-gue180612}
\hat S_{k,\alpha}(\hat x)=B_k(z)S_{p_1,\ldots,p_d}(x),\ \ \forall \hat x=(z,x)\in\hat X=M\times X,
\end{equation}
where $\hat S_{k,\alpha}$ is as in \eqref{e-gue180604m}. From \eqref{e-gue180612}, the theorem follows.
\end{proof}

\begin{proof}[Proof Theorem~\ref{t-gue180528}]
From Bergman kernel asymptotic expansion for positive line bundles (see~\cite{DLM06}, ~\cite{HM12},~\cite{MM07},~\cite{Zel98}), we know that
\begin{equation}\label{e-gue180612I}
\begin{split}
&B_k(z)\sim\sum^n_{j=0}k^{n-j}b_j(z)\ \text{ in }S^{n}_{{\rm loc\,}}
(1;M),\\
&b_j(z)\in C^\infty(M),\ \ j=0,1,2,\ldots,\\
&b_0(z)=(2\pi)^{-n}\abs{\det R^L_z},\ \ \forall z\in M.
\end{split}
\end{equation}
Let $k_0\in\mathbb N$ be a large constant so that $B_k(z)\geq ck^n$ on $M$, for all $k\geq k_0$, where $c>0$ is a constant. Hence, for $k\geq k_0$,
\begin{equation}\label{e-gue180612II}
\begin{split}
&\frac{1}{B_k(z)}\sim\sum^n_{j=0}k^{-n-j}c_j(z)\ \text{ in }S^{-n}_{{\rm loc\,}}
(1;M),\\
&c_j(z)\in C^\infty(M),\ \ j=0,1,2,\ldots,\\
&c_0(z)=(2\pi)^{n}\abs{\det R^L_z}^{-1},\ \ \forall z\in M.
\end{split}
\end{equation}
From \eqref{e-gue180608eI}, for $k\geq k_0$, we have
\begin{equation}\label{e-gue180612III}
S_{k,\tau}(x)=\frac{\hat S_{k,\tau}(\hat x)}{B_k(z)},\ \ \forall \hat x=(z,x)\in\hat X=M\times X.
\end{equation}
From \eqref{e-gue180605mp}, \eqref{e-gue180605mpII}, \eqref{e-gue180612II} and \eqref{e-gue180612III}, the theorem follows.
\end{proof}

In the rest of this section, we will prove Theorem~\ref{t-gue180528I}. Let $S_k(x)\in C^\infty(X)$ be as in \eqref{e-gue180529mII}. We need

\begin{theorem}\label{t-gue180622mp}
With the notations and assumptions above, we have
\[\limsup_{k\To+\infty}k^{-(n+1)}S_k(x)\leq\frac{1}{2}\pi^{-n-1}\frac{1}{n+1}\abs{{\rm det\,}\mathcal{L}_x},\]
for every $x\in X$.
\end{theorem}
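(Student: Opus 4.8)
The plan is to reduce the estimate to the product CR manifold $\hat X=M\times X$ constructed in this section and then to apply the asymptotic upper bound of Theorem~\ref{t-gue180618}.

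First I would record the unweighted analogue of Theorem~\ref{t-gue180605e}. For $(p_1,\ldots,p_d)\in\mathbb Z^d$ and $\alpha=\mu_1p_1+\cdots+\mu_dp_d$, identity \eqref{e-gue180612} from the proof of Theorem~\ref{t-gue180605e} gives $\hat S_{k,\alpha}(\hat x)=B_k(z)S_{p_1,\ldots,p_d}(x)$ for $\hat x=(z,x)$. Since $\set{\mu_1,\ldots,\mu_d}$ is linearly independent over $\mathbb Q$, the map $(p_1,\ldots,p_d)\mapsto\mu_1p_1+\cdots+\mu_dp_d$ is injective on $\mathbb Z^d$; summing \eqref{e-gue180612} over all $\alpha\in{\rm Spec\,}(-iT)$ with $0\le\alpha\le k$ and using \eqref{e-gue180608e} then yields, with $\hat S_k$ as in \eqref{e-gue180612m},
\[\hat S_k(\hat x)=B_k(z)\,S_k(x),\qquad\forall\,\hat x=(z,x)\in\hat X=M\times X.\]

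Next I would apply Theorem~\ref{t-gue180618} to $\hat X$, keeping in mind that $\dim\hat X=2(2n)+1$, so that the integer $n$ occurring in that theorem and in \eqref{e-gue180618} must be read as $2n$ here. Since Assumption~\ref{a-gue180605} and Assumption~\ref{a-gue180604} hold on $\hat X$, this gives
\[\limsup_{k\To+\infty}k^{-(2n+1)}\hat S_k(\hat x)\le(2\pi)^{-2n-1}\int_{t\in[0,1]\cap\Real_{\hat x,0}}\abs{\det\bigl(R^L_{\hat x}+2t\mathcal{L}_{\hat x}\bigr)}\,dt.\]
By \eqref{e-gue180605p} the form $R^L_{\hat x}+2t\mathcal{L}_{\hat x}$ is positive definite for every $t>0$, so $(0,+\infty)\subset\Real_{\hat x,0}$ and $[0,1]\setminus\Real_{\hat x,0}$ is a Lebesgue null set; inserting \eqref{e-gue180605mpI} I would then evaluate the integral as $\abs{\det R^L_z}\,\abs{\det\mathcal{L}_x}\int_0^1(2t)^n\,dt=\tfrac{2^n}{n+1}\abs{\det R^L_z}\,\abs{\det\mathcal{L}_x}$.

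Finally I would divide by $B_k(z)$: for $k$ large one has $B_k(z)>0$, so $S_k(x)=\hat S_k(\hat x)/B_k(z)$, while the Bergman kernel expansion \eqref{e-gue180612I} gives $k^{-n}B_k(z)\to(2\pi)^{-n}\abs{\det R^L_z}$ as $k\To+\infty$. Since $S_k(x)\ge0$, combining these,
\[\limsup_{k\To+\infty}k^{-(n+1)}S_k(x)=\frac{\limsup_{k\To+\infty}k^{-(2n+1)}\hat S_k(\hat x)}{(2\pi)^{-n}\abs{\det R^L_z}}\le(2\pi)^{-n-1}\frac{2^n}{n+1}\abs{\det\mathcal{L}_x},\]
and $(2\pi)^{-n-1}2^n=\tfrac12\pi^{-n-1}$ turns this into the asserted bound. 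I expect the only delicate points to be the dimension bookkeeping when invoking Theorem~\ref{t-gue180618} (the ambient $\hat X$ has dimension $4n+1$, so $\Real_{\hat x,0}$ concerns $2n$ positive eigenvalues) and the observation that $[0,1]\cap\Real_{\hat x,0}$ has full measure in $[0,1]$, which is precisely the positivity \eqref{e-gue180605p}; the remaining manipulations are routine bookkeeping with results already proved.
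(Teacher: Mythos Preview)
Your proof is correct and follows essentially the same route as the paper: factor $\hat S_k(\hat x)=B_k(z)S_k(x)$ via \eqref{e-gue180612}, apply Theorem~\ref{t-gue180618} on $\hat X$ (with the $n\mapsto 2n$ dimension shift), evaluate the integral using \eqref{e-gue180605mpI}, and divide by the Bergman kernel asymptotics \eqref{e-gue180612I}. You are in fact more explicit than the paper about why the map $(p_1,\ldots,p_d)\mapsto\sum\mu_jp_j$ is injective and why $[0,1]\cap\Real_{\hat x,0}$ has full measure, but these are clarifications of the same argument rather than a different approach.
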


\begin{proof}
Let $\hat S_k(\hat x)\in C^\infty(\hat X)$ be as in \eqref{e-gue180612m}. From \eqref{e-gue180608eI}, we have
\begin{equation}\label{e-gue180622m}
\hat S_k(\hat x)=B_k(z)S_k(x),\ \ \forall \hat x=(z,x)\in\hat X=M\times X.
\end{equation}
By Theorem~\ref{t-gue180618}, we have
\begin{equation}\label{e-gue180622mII}
\limsup_{k\To+\infty}k^{-(2n+1)}\hat S_k(\hat x)\leq(2\pi)^{-2n-1}\int_{t\in[0,1]\bigcap\mathbb{R}_{\hat x,0}}\abs{\det(R^L_{\hat x}+2t\mathcal{L}_{\hat x})}dt,
\end{equation}
for every $\hat x=(z,x)\in\hat X=M\times X$. From \eqref{e-gue180622m} and \eqref{e-gue180612I}, we conclude that
\begin{equation}\label{e-gue180622mI}
\limsup_{k\To+\infty}k^{-(2n+1)}\hat S_k(\hat x)=(2\pi)^{-n}\abs{\det R^L_z}\limsup_{k\To+\infty}k^{-(n+1)}S_k(x),
\end{equation}
for every $\hat x=(z,x)\in\hat X=M\times X$. From \eqref{e-gue180622mII} and \eqref{e-gue180622mI}, we deduce that
\begin{equation}\label{e-gue180622mIII}
\limsup_{k\To+\infty}k^{-(n+1)}S_k(x)\leq (2\pi)^{-n-1}\Bigr(\abs{\det R^L_z}\Bigr)^{-1}\int_{t\in[0,1]\bigcap\mathbb{R}_{\hat x,0}}\abs{\det(R^L_{\hat x}+2t\mathcal{L}_{\hat x})}dt,
\end{equation}
for every $\hat x=(z,x)\in\hat X=M\times X$. It is easy to check that
\begin{equation}\label{e-gue180622p}
\begin{split}
\int_{t\in[0,1]\bigcap\mathbb{R}_{\hat x,0}}\abs{\det(R^L_{\hat x}+2t\mathcal{L}_{\hat x})}dt&=\abs{\det R^L_z}\abs{\det\mathcal{L}_x}\int^1_0(2t)^ndt\\
&=\abs{\det R^L_z}\abs{\det\mathcal{L}_x}2^n\frac{1}{n+1},
\end{split}
\end{equation}
for every $\hat x=(z,x)\in\hat X=M\times X$. From \eqref{e-gue180622p} and \eqref{e-gue180622mIII}, the theorem follows.
\end{proof}

\begin{theorem}\label{t-gue180622mpI}
With the notations and assumptions above, we have
\[\liminf_{k\To+\infty}k^{-(n+1)}S_k(x)\geq\frac{1}{2}\pi^{-n-1}\frac{1}{n+1}\abs{{\rm det\,}\mathcal{L}_x},\]
for every $x\in X$.
\end{theorem}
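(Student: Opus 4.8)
The plan is to obtain the lower bound by sandwiching the unweighted kernel $S_k(x)$ from below by weighted kernels $S_{k,\tau}(x)$ and then appealing to Theorem~\ref{t-gue180528}. The elementary observation driving everything is that each $S_{p_1,\ldots,p_d}(x)=\sum^d_{j=1}\abs{f_j(x)}^2$ is non-negative, and that both $S_{k,\tau}(x)$ and $S_k(x)$ are finite sums of such terms (cf.\ the discussion after \eqref{e-gue180528III} and after \eqref{e-gue180529mII}). Consequently, if $\tau\in C^\infty_0(]0,1[)$ satisfies $0\leq\tau\leq1$, then for every $(p_1,\ldots,p_d)\in\mathbb Z^d$ we have
\[\tau\Bigl(\frac{\mu_1p_1+\cdots+\mu_dp_d}{k}\Bigr)\leq 1_{[0,k]}(\mu_1p_1+\cdots+\mu_dp_d),\]
since the left-hand side vanishes unless $0<\mu_1p_1+\cdots+\mu_dp_d<k$; multiplying by $S_{p_1,\ldots,p_d}(x)\geq0$ and summing over $(p_1,\ldots,p_d)\in\mathbb Z^d$ gives $S_{k,\tau}(x)\leq S_k(x)$ for all $x\in X$ and all $k$.

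First I would fix such a $\tau$ and apply Theorem~\ref{t-gue180528}, which yields the genuine limit
\[\lim_{k\To+\infty}k^{-n-1}S_{k,\tau}(x)=(2\pi)^{-n-1}\abs{{\rm det\,}\mathcal{L}_x}\int(2t)^n\abs{\tau(t)}^2\,dt.\]
Combining this with $S_{k,\tau}(x)\leq S_k(x)$ gives
\[\liminf_{k\To+\infty}k^{-n-1}S_k(x)\geq(2\pi)^{-n-1}\abs{{\rm det\,}\mathcal{L}_x}\int(2t)^n\abs{\tau(t)}^2\,dt\]
for every admissible cut-off $\tau$.

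The final step is to let $\tau$ exhaust $]0,1[$: choose $\tau_m\in C^\infty_0(]0,1[)$ with $0\leq\tau_m\leq1$ and $\tau_m(t)\to1$ for every $t\in]0,1[$ as $m\to\infty$. Since $0\leq(2t)^n\abs{\tau_m(t)}^2\leq(2t)^n1_{]0,1[}(t)\in L^1(\mathbb R)$, the dominated convergence theorem gives $\int(2t)^n\abs{\tau_m(t)}^2\,dt\to\int^1_0(2t)^n\,dt=\frac{2^n}{n+1}$. Taking $\tau=\tau_m$ in the last display and letting $m\to+\infty$ we obtain
\[\liminf_{k\To+\infty}k^{-n-1}S_k(x)\geq(2\pi)^{-n-1}\abs{{\rm det\,}\mathcal{L}_x}\,\frac{2^n}{n+1}=\frac{1}{2}\pi^{-n-1}\frac{1}{n+1}\abs{{\rm det\,}\mathcal{L}_x},\]
which is the assertion; together with Theorem~\ref{t-gue180622mp} this also completes the proof of Theorem~\ref{t-gue180528I}.

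I do not expect any serious obstacle in this argument. The only points requiring care are that the termwise comparison $S_{k,\tau}\leq S_k$ is legitimate — which relies on $S_{p_1,\ldots,p_d}(x)\geq0$ together with ${\rm supp\,}\tau$ being a compact subset of $]0,1[$, so that every multi-index contributing to $S_{k,\tau}(x)$ also satisfies $0\leq\mu_1p_1+\cdots+\mu_dp_d\leq k$ — and that for a single fixed cut-off one only recovers a $\liminf$ (not a limit) for $S_k$; this loss is recovered precisely by the monotone exhaustion $\tau_m\uparrow 1_{]0,1[}$ and dominated convergence, so no quantitative control on the error terms in Theorem~\ref{t-gue180528} is needed.
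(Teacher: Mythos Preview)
Your argument is correct and is essentially the paper's own proof: the paper also fixes cut-offs $\tau_\delta\in C^\infty_0(]0,1[)$ with $0\le\tau_\delta\le1$ and $\tau_\delta=1$ on $[\delta,1-\delta]$, uses $S_{k,\tau_\delta}\le S_k$ together with Theorem~\ref{t-gue180528} to get $\liminf_{k\to\infty}k^{-(n+1)}S_k(x)\ge(2\pi)^{-n-1}\abs{\det\mathcal L_x}\int_\delta^{1-\delta}(2t)^n\,dt$, and then lets $\delta\to0$. The only cosmetic difference is that you pass to the limit via dominated convergence on a generic sequence $\tau_m\uparrow 1_{]0,1[}$, whereas the paper computes $\int_\delta^{1-\delta}(2t)^n\,dt$ explicitly.
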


\begin{proof}
Let $\frac{1}{2}>\delta>0$ be a small constant and let $\tau_\delta\in C^\infty(]0,1[)$, $0\leq\tau_\delta\leq1$, $\tau_\delta=1$ on $[\delta,1-\delta]$. From Theorem~\ref{t-gue180528}, \eqref{e-gue180529m} and notice that $S_k(x)\geq S_{k,\tau_\delta}(x)$, for every $x\in X$ and every $\frac{1}{2}>\delta>0$, we have
\begin{equation}\label{e-gue180622y}
\begin{split}
\liminf_{k\To+\infty}k^{-(n+1)}S_k(x)&\geq\liminf_{k\To+\infty}k^{-(n+1)}S_{k,\tau_\delta}(x)\\
&\geq(2\pi)^{-n-1}\abs{{\rm det\,}\mathcal{L}_x}\int (2t)^n\abs{\tau_\delta(t)}^2dt\\
&\geq (2\pi)^{-n-1}\abs{{\rm det\,}\mathcal{L}_x}\int^{1-\delta}_\delta (2t)^ndt\\
&=\frac{1}{2}\pi^{-n-1}\frac{1}{n+1}\abs{{\rm det\,}\mathcal{L}_x}\Bigr((1-\delta)^{n+1}-\delta^{n+1}\Bigr),
\end{split}
\end{equation}
for every $x\in X$ and every  $\frac{1}{2}>\delta>0$. Let $\delta\To0$ in \eqref{e-gue180622y}, the theorem follows.
\end{proof}

\begin{proof}[Proof of Theorem~\ref{t-gue180528I}]
From Theorem~\ref{t-gue180622mp} and Theorem~\ref{t-gue180622mpI}, Theorem~\ref{t-gue180528I} follows.
\end{proof}

\end{document}